\documentclass[11pt, reqno]{amsart}
\usepackage{graphicx, amssymb, amsmath, amsthm}
\usepackage[utf8]{inputenc}
\usepackage{epsfig}
\usepackage{hyperref}
\usepackage{comment}
\usepackage{mathrsfs}
\numberwithin{equation}{section}
\usepackage{amsthm}
\usepackage{subfigure}
\usepackage{tikz}
\usepackage{here}
\usepackage{float}
\usepackage{pifont}
\usepackage{enumitem}
\usetikzlibrary{matrix,arrows}
\usetikzlibrary{shapes}
\usetikzlibrary{calc}
\usetikzlibrary{arrows}
\usetikzlibrary{decorations.pathreplacing,decorations.markings}

\usepackage{tikz-cd}

\usetikzlibrary{patterns}

\newtheorem{theorem}{Theorem}[section]

\newtheorem{lemma}[theorem]{Lemma}

\newtheorem{conjecture}[theorem]{Conjecture}
\newtheorem{proposition}[theorem]{Proposition}
\newtheorem{question}[theorem]{Question}

\theoremstyle{definition}

\makeatletter
\newcommand{\Extend}[5]{\ext@arrow0099{\arrowfill@#1#2#3}{#4}{#5}}
\makeatother

\DeclareMathOperator{\Lip}{Lip}
\DeclareMathOperator{\dist}{dist}

\DeclareMathOperator{\vol}{vol}
\DeclareMathOperator{\spt}{spt}

\begin{document}

\title[Calabi-Yau type theorem]{Calabi-Yau type theorem for complete manifolds with nonnegative scalar curvature}

\author[J. Zhu]{Jintian Zhu}
\address[Jintian Zhu]{Institute for Theoretical Sciences, Westlake University, 600 Dunyu Road, 310030, Hangzhou, Zhejiang, People's Republic of China}
\email{zhujintian@westlake.edu.cn}

\begin{abstract}
In this paper, we are able to prove an analogy of the Calabi-Yau theorem for complete Riemannian manifolds with nonnegative scalar curvature which are aspherical at infinity. The key tool is an existence result for arbitrarily large {\it bounded} regions with weakly mean-concave boundary in Riemannian manifolds with sublinear volume growth.
As an application, we use the same tool to show that a complete contractible Riemannian $3$-manifold with positive scalar curvature and sublinear volume growth is necessarily homeomorphic to $\mathbb R^3$.
\end{abstract}

\maketitle

\section{Introduction}

Let $(M,g)$ be a complete Riemannian manifold. Recall that $(M,g)$ has {\it sublinear volume growth} if it holds
$$\liminf_{r\to +\infty}\frac{\vol_g(B_r(p))}{r}=0\mbox{ for some point }p\in M,$$
where the choice of the point $p$ plays no role in this definition. It is well-known that there are interplays between nonnegative curvatures and the underlying topology on complete Riemannian manifolds with sublinear volume growth. The famous Calabi-Yau theorem \cite{Calabi75, Yau76} states that if $(M,g)$ has nonnegative Ricci curvature, then it has sublinear volume growth if and only if it is closed. Very recently, such purely geometric way of characterizing compactness for complete manifolds under nonnegative-curvature condition plays a crucial role in the author's establishment of various rigidity results on complete manifolds with nonnegative scalar curvature (see the author's previous works \cite{Zhu23a, Zhu23+}).

The goal of this paper is to establish an analogy of the Calabi-Yau theorem with much weaker condition of nonnegative scalar curvature.  First we point out that additional conditions are necessary in order to obtain such kind of result  since there are indeed open complete Riemannian manifolds with nonnegative scalar curvature even having finite volume (see Appendix \ref{sec: finite volume}).

As a natural choice, we consider the class of manifolds which are {\it aspherical at infinity}. 
Recall that a manifold $M$ is said to be aspherical at infinity means that for each positive integer $i\geq 2$ and compact subset $K\subset M$ there is a larger compact subset $\tilde K\supset K$ such that the inclusion map $\pi_i(M\setminus \tilde K)\to \pi_i(M\setminus K)$ is the zero map. 
Our main theorem is
\begin{theorem}\label{Thm: main}
Let $(M^n,g)$, $2\leq n\leq 4$, be a complete Riemannian manifold with nonnegative scalar curvature outside a compact subset, which is aspherical at infinity. Then it has sublinear volume growth if and only if it is closed.
\end{theorem}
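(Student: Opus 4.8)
The forward implication is trivial: if $M$ is closed then $\vol_g(B_r(p))\le\vol_g(M)<\infty$ for every $r>0$, so $\vol_g(B_r(p))/r\to 0$ and $(M,g)$ has sublinear volume growth. For the converse the plan is to argue by contradiction, assuming that $(M,g)$ has sublinear volume growth but is not closed, and to derive a contradiction from the interplay of the existence result for large bounded regions with weakly mean-concave boundary, the nonnegativity of the scalar curvature near infinity, and asphericity at infinity. After passing to a noncompact connected component and, if convenient, to its orientation double cover --- all of which preserves nonnegativity of the scalar curvature outside a compact set, asphericity at infinity, and sublinearity of the volume --- I may assume $M$ is connected, noncompact and oriented, with $\mathrm{scal}_g\ge 0$ outside a fixed compact set $K$.

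First I would convert the sublinear volume growth, via the quoted existence result, into a useful exhaustion: for each $L>0$ there is a bounded open set $\Omega_L\supset B_L(p)$ with smooth boundary whose mean curvature with respect to the outward unit normal is $\le 0$, and whose volume is controlled (in particular still of sublinear type along a suitable sequence $L\to\infty$). Since $M$ is aspherical at infinity, the definition applied to the finitely many indices $i=2,\dots,n-1$ furnishes a compact $\tilde K\supset K$ with $\pi_i(M\setminus\tilde K)\to\pi_i(M\setminus K)$ zero; fixing such a $\tilde K$ and then taking $L$ so large that $\tilde K\subset\Omega_L$, I obtain a compact ``band'' $W:=\overline{\Omega_L}\setminus\operatorname{int}\tilde K$ carrying $\mathrm{scal}_g\ge 0$, with weakly mean-concave outer boundary $\partial\Omega_L$, and whose end side has its higher homotopy killed inside $M\setminus K$. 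The role of the weak mean-concavity is that, by the strong maximum principle, $\partial\Omega_L$ is a barrier that no area-minimizing hypersurface or $\mu$-bubble inside $W$ can touch.

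The core of the argument is then a Schoen-Yau/Gromov-type dimension reduction performed in $W$. I would minimize area, or a $\mu$-bubble functional when better adapted to the relative class at hand, over a relative class chosen so as to separate $\partial\tilde K$ from $\partial\Omega_L$ --- hence to detect the noncompact end of $M$. By the barrier property the minimizer $\Sigma^{n-1}$ avoids $\partial\Omega_L$, and for $2\le n\le 4$ it is a smooth embedded hypersurface with boundary contained in $\partial\tilde K$. The second variation inequality together with the conformal deformation of Schoen and Yau endows $\Sigma$ with a metric of nonnegative scalar curvature outside a compact set; one checks that the relevant boundary portion remains weakly mean-concave and that the homotopical essentialness descends one dimension. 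Iterating at most twice, I reach a two-dimensional object --- a compact surface with weakly mean-concave boundary, or, after a limit $L\to\infty$ along the chosen sequence, a complete noncompact surface --- in either case carrying nonnegative Gauss curvature outside a compact set, sublinear area growth inherited from the volume control on the $\Omega_L$, and topology obstructing the $n=2$ conclusion. The two-dimensional case, which follows from the Gauss-Bonnet theorem together with the Cohn-Vossen inequality, then yields a contradiction whenever a strict inequality is present; in the borderline cases one runs the usual rigidity analysis to conclude that $M$ itself carries a flat (or, more generally, warped-product) structure forcing at least linear volume growth, again a contradiction. Hence $M$ is closed.

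The step I expect to be the genuine obstacle is not the analysis --- area minimization, $\mu$-bubbles and the conformal trick are routine in this range of dimensions --- but the topological bookkeeping of the descent, together with the rigidity endgame. One must select the relative classes at each stage so that the successive minimal hypersurfaces are $\pi_1$-injective and stay simultaneously ``essential'' and ``aspherical at infinity'' in one dimension less, so that the induction both closes and genuinely bottoms out on a surface with nontrivial topology. This is exactly what pins down the restriction $2\le n\le 4$: the descent must remain within the range where one can both guarantee regularity of the minimizers and transport the aspherical-type obstruction all the way down to dimension $2$. Finally one has to keep the volume and area bounds uniform along the descent and compatible with the limit $L\to\infty$, and to handle with care the borderline configurations in which the relevant monotonicities degenerate.
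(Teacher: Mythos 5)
Your outer frame (forward direction trivial; for the converse, use sublinear volume growth via Lemma \ref{Lem: concave} to produce a large bounded region with weakly mean-concave boundary, use asphericity at infinity to fix nested compact sets killing $\pi_i$, and aim at a contradiction with sublinearity) matches the paper, but the core of your argument is a Schoen--Yau style dimension descent inside the compact band $W=\overline{\Omega_L}\setminus\operatorname{int}\tilde K$, and it has genuine gaps exactly where you flag them. First, the descent's topological bookkeeping is not an incidental difficulty to be ``checked'': it is the main content, and you give no mechanism for it. In the paper this is done homologically, not by iterated descent: setting $X_2=M\setminus\overline\Omega$ and $X_i=M\setminus K_{n-i}$, an excision argument shows $H_n(X_n,X_2;\mathbb Q)=0$, hence $H_{n-1}(X_2,\mathbb Q)\to H_{n-1}(X_n,\mathbb Q)$ is injective, and then Proposition \ref{Prop: vanishing} (which for $n=4$ relies on the classification of closed oriented $3$-manifolds with positive scalar curvature, plus the vanishing of $\pi_2,\pi_3$ maps coming from asphericity at infinity) shows that \emph{no} embedded hypersurface in $M_\infty=M\setminus\Omega$ representing a nontrivial $\mathbb Q$-homology class can admit positive scalar curvature. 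Your sketch never produces this obstruction, and an iterated minimal-hypersurface descent does not obviously transport ``aspherical at infinity'' down a dimension (stable minimal hypersurfaces need not be $\pi_1$-injective or inherit the asphericity condition), which is precisely why the restriction $3\le n\le 4$ and the $\mathbb Q$-homology vanishing input appear in the paper.

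Second, your endgame is quantitatively unjustified. There is no reason a minimal hypersurface (or its $L\to\infty$ limit) inherits ``sublinear area growth'' from the ambient volume control --- minimal hypersurface area is not dominated by ambient volume --- and the objects your band construction produces are compact with boundary, so the Cohn--Vossen/Gauss--Bonnet contradiction you invoke has nothing to bite on; the ``rigidity forcing at least linear volume growth'' is asserted, not proved. The paper's contradiction is of a different nature: once hypersurfaces representing nontrivial $\mathbb Q$-classes in $M_\infty$ are known to carry no PSC metric, the warped $\mu$-bubble splitting theorem (Proposition \ref{Prop: splitting}, itself using the prescribed-mean-curvature functions $h_\epsilon$, curvature estimates, the Calabi--Yau theorem to force compactness of the limiting minimal hypersurface, and a foliation argument) shows $(M_\infty,g)$ is isometric to the product $(\partial M_\infty,g|_{\partial M_\infty})\times[0,+\infty)$ with $\partial M_\infty$ compact, which manifestly has linear volume growth and contradicts sublinearity directly. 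So the missing ideas in your proposal are (i) the $\mathbb Q$-homology vanishing mechanism converting asphericity at infinity into a PSC-obstruction on essential hypersurfaces, and (ii) the splitting of the end as an exact metric cylinder, which is what actually produces the volume-growth contradiction.
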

We emphasize that the extra aspherical condition at infinity is natural in the sense that every open surface appears to be aspherical at infinity. In other words, Theorem \ref{Thm: main} is indeed a generalization of the 2D Calabi-Yau theorem in the context of scalar curvature. 

Given the Calabi-Yau type Theorem \ref{Thm: main} and also the aspherical splitting theorem established in \cite[Theorem 1.7]{HZ23+}, there may be an underlying principle saying that {\it nonnegative scalar curvature on complete open manifolds with extra natural topological condition has similar behaviours with nonnegative Ricci curvature on complete open manifolds}. Here natural means that open surfaces automatically satisfy these topological conditions.

The main strategy to prove Theorem \ref{Thm: main} is the following existence lemma for an arbitrarily large {\it bounded} region with weakly mean-concave boundary in complete open Riemannian manifolds with sublinear volume growth. The proof is based on the $\mu$-bubble method raised by Gromov \cite{Gromov23}. Recently, Gromov's $\mu$-bubble method has led to applications in many other geometric problems (refer to \cite{Gromov20+, CL20+, LUY21+, Zhu21, CL23, CRZ23, Zhu23b, CLMS24+} and references therein). 
\begin{lemma}\label{Lem: concave}
    Let $(M^n,g)$, $2\leq n\leq 7$, be an open complete Riemannian manifold with sublinear volume growth. Then for any bounded region $K$ we can find a larger bounded region $\hat\Omega\supset K$ such that $\partial\Omega$ is weakly mean-concave with respect to the unit outer normal of $\partial\Omega$ in $\Omega$.
\end{lemma}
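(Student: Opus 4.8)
Fix a point $p\in M$ and write $\rho=\dist(\cdot,p)$; given the bounded region $K$, choose $r_0$ with $K\subset B_{r_0}(p)$. The plan is to realize $\hat\Omega$ as a $\mu$-bubble in a large annular region, with a prescribing function engineered so that the bubble is bounded, contains $K$, and has its boundary confined to the zone where the prescribing function is nonpositive. The only place the hypothesis enters is the following consequence of sublinear volume growth: by the coarea formula $\int_0^{2T}\mathcal H^{n-1}(\{\rho=t\})\,dt=\vol_g(B_{2T}(p))$, so taking $T_j\to\infty$ along which $\vol_g(B_{2T_j})/(2T_j)\to0$ produces radii $r_1>r_0$, as far out as we wish, with $\mathcal H^{n-1}(\{\rho=r_1\})$ as small as we wish; by Sard's theorem we may take $\{\rho=r_1\}$ to be a smooth hypersurface. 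Fix such an $r_1$ with $\mathcal H^{n-1}(\{\rho=r_1\})<\ve$, put $R=r_1+1$ and $N=B_R(p)\setminus\overline{B_{r_0}(p)}$.

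Next, following Gromov \cite{Gromov23}, pick $h=\phi(\rho)$ on $N$ with $\phi\in C^\infty(r_0,R)$ strictly decreasing, $\phi\to+\infty$ as $\rho\to r_0^+$, $\phi\to-\infty$ as $\rho\to R^-$, $\phi>0$ on $(r_0,r_1)$ and $\phi\le 0$ on $[r_1,R)$. Set $\Omega_0=B_{r_1}(p)$ and minimise
\[
\mathcal A(\Omega)=\mathcal H^{n-1}(\partial^*\Omega\cap N)-\int_N(\chi_\Omega-\chi_{\Omega_0})\,h\,d\vol_g
\]
over Caccioppoli sets $\Omega$ with $\Omega\triangle\Omega_0\Subset N$. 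Since the integrand is supported in $\Omega\triangle\Omega_0$ and the blow-up of $h$ at the two components of $\partial N$ is penalising, $\mathcal A$ is bounded below on the admissible class and minimising sequences stay away from $\partial N$; the standard compactness and regularity theory for $\mu$-bubbles in dimensions $2\le n\le 7$ then yields a minimiser $\hat\Omega$ with $\partial\hat\Omega\cap N$ a smooth embedded hypersurface at positive distance from $\partial N$ and with mean curvature $H_{\partial\hat\Omega}=h$ with respect to the outward normal of $\hat\Omega$. Because $\hat\Omega$ agrees with $\Omega_0$ near $\partial N$ one gets $K\subset B_{r_0}(p)\subset\hat\Omega\subset B_{R-\delta}(p)$ for some $\delta>0$; discarding any connected component of $\hat\Omega$ disjoint from $B_{r_0}(p)$ strictly decreases $\mathcal A$, so we may assume $\hat\Omega$ is connected.

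It then remains to prove $\partial\hat\Omega\subset\{\rho\ge r_1\}=\{h\le 0\}$, equivalently $\hat\Omega\supset B_{r_1}(p)$, for then $H_{\partial\hat\Omega}=h\le0$ and $\hat\Omega$ is the desired region. Suppose $W:=B_{r_1}(p)\setminus\hat\Omega$ had positive measure; since $\hat\Omega$ contains a neighbourhood of $\partial B_{r_0}(p)$, the competitor $\Omega'=\hat\Omega\cup B_{r_1}(p)$ is admissible, and comparing $\mathcal A(\Omega')$ with $\mathcal A(\hat\Omega)$ and using submodularity of perimeter gives
\[
\mathcal H^{n-1}\!\big(\partial^*\hat\Omega\cap\{\rho<r_1\}\big)+\int_W h\,d\vol_g\ \le\ \mathcal H^{n-1}(\{\rho=r_1\})<\ve .
\]
Thus the part of $\partial\hat\Omega$ lying inside $B_{r_1}(p)$ has area $<\ve$, and since it bounds $W$ together with a subset of $\partial B_{r_1}(p)$, the (locally Euclidean) isoperimetric inequality on the fixed precompact set $B_{r_1}(p)$ forces $|W|$ to be small; letting $\ve\to0$ through ever thinner, ever more distant spheres and passing to a limit forces $|W|=0$, hence $\hat\Omega\supset B_{r_1}(p)$. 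This last step is the main obstacle: the prescribing function is necessarily positive near the inner barrier (this is what drives $\hat\Omega$ to engulf $K$ rather than collapse onto $\partial B_{r_0}(p)$), so one must rule out $\partial\hat\Omega$ dipping into $\{h>0\}$, and over $\mathbb R^n$ this genuinely fails precisely because there are no thin spheres to feed into the estimate above. Upgrading the estimate from "$\hat\Omega$ contains most of $B_{r_1}(p)$" to the exact inclusion $\hat\Omega\supset B_{r_1}(p)$ — via the interplay of the minimality of $\hat\Omega$, the coarea bound, and the limiting procedure — is where the real work lies.
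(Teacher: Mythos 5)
Your opening step is the same as the paper's --- sublinear volume growth plus the coarea formula produces arbitrarily distant level sets of arbitrarily small area --- but the construction of the region has a genuine gap, and it is exactly the one you flag at the end: nothing in your argument prevents $\partial\hat\Omega$ from dipping into the zone $\{r_0<\rho<r_1\}$ where your prescribing function is positive. The comparison with $\hat\Omega\cup B_{r_1}(p)$ only shows that the offending piece of boundary has area $<\varepsilon$ and that $W=B_{r_1}(p)\setminus\hat\Omega$ has small volume; a minimizer may perfectly well have such a small ``dimple'', and on it the mean curvature equals $h>0$, so the region you produce need not be weakly mean-concave. There is no maximum-principle argument to exclude this for a general metric, and the proposed repair --- letting $\varepsilon\to0$ along thinner, more distant spheres and ``passing to a limit'' --- does not close the gap: each $\varepsilon$ gives a different band, a different $h$ and a different minimizer, a subsequential limit of these regions need not be bounded or even well defined as a bounded region containing $K$, and the lemma asks for an exact bounded mean-concave region, not a limit object. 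So the proof is incomplete at its decisive step.

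The paper resolves precisely this point by never letting the prescribed mean curvature become positive. Containment of $K$ is imposed as a hard constraint on the admissible Caccioppoli sets ($E^c\subset\Omega$) rather than by a soft barrier $h\to+\infty$ near the inner boundary, and $h=\eta\circ\rho\le0$ everywhere, vanishing on a collar around $K$. The price is a possible contact of the minimizer with the obstacle $\partial E$ (your $\partial B_{r_0}(p)$), and this is excluded quantitatively: one fixes a separation band $V$ around $K$ and two positive constants, $c_{gap}$ (least mass of a homologically non-trivial cycle supported in $V$) and $c_{height}$ (least area of a connected minimal hypersurface in $V$ whose boundary meets both sides of $V$). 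Each boundary component of the final region is homologically non-trivial and, by comparison with the small far-away level set, has area $<\min\{c_{gap},c_{height}\}$; hence it cannot lie entirely inside $V$, and if it touched $\partial E$ it would contain a connected minimal piece crossing $V$, of area at least $c_{height}$ --- a contradiction. This band-and-thresholds mechanism (or some substitute forcing the boundary away from the region where your $h>0$) is what your proposal is missing, and without it the inclusion $\hat\Omega\supset B_{r_1}(p)$ that your argument hinges on cannot be obtained.
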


We note that it follows from the previous works \cite{CL20, Song23} that there is at least one embedded minimal hypersurface in a complete Riemannian manifold with sublinear volume growth, but the compactness of the minimal hypersurface cannot be guaranteed therein. In this work, the pass from a minimal hypersurface to a $\mu$-bubble successfully handle the compactness issue. 

With Lemma \ref{Lem: concave} we are able to provide an alternative proof of the Calabi-Yau theorem combined with the splitting theorem \cite[Theorem 2]{CK92}, which has the obvious advantage that the Bishop-Gromov volume comparison need not be used, and this explains the reason why our method can be generalized to the weaker setting of scalar curvature. With the same philosophy, we just reduce Theorem \ref{Thm: main} to the validity of the following splitting theorem:

\begin{proposition}\label{Prop: splitting}
    Let $(M^n_\infty,g)$, $3\leq n\leq 7$, be a complete and non-compact Riemannian manifold with compact and weakly mean-convex boundary $\partial M_\infty$. If $(M_\infty,g)$ has nonnegative scalar curvature and any embedded hypersurface representing a non-trivial $\mathbb Q$-homology class cannot admit any metric with positive scalar curvature, then $(M_\infty,g)$ must split into the Riemannian product $(\partial M_\infty,g|_{\partial M_\infty})\times [0,+\infty)$.
\end{proposition}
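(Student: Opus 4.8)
The plan is to manufacture a \emph{compact}, two-sided, stable minimal hypersurface $\Sigma\subset M_\infty$ representing the class of $N:=\partial M_\infty$, to upgrade $R_g\ge 0$ on such a $\Sigma$ into strong rigidity, and then to propagate that rigidity into a global product splitting. The first ingredient is topological: after passing to the orientation double cover if necessary, and treating the (finitely many) components of $M_\infty$ and of $N$ one at a time, we may assume $M_\infty$ and $N$ connected and oriented; since $\operatorname{int}M_\infty$ is connected and non-compact, $H^0_c(\operatorname{int}M_\infty;\mathbb Q)=0$, so Lefschetz duality gives $H_n(M_\infty,N;\mathbb Q)=0$, and the long exact sequence of the pair $(M_\infty,N)$ shows that $H_{n-1}(N;\mathbb Q)\hookrightarrow H_{n-1}(M_\infty;\mathbb Q)$. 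Thus $[N]\neq 0$ in $H_{n-1}(M_\infty;\mathbb Q)$, and the same holds for any embedded hypersurface homologous to $N$ (after deleting null-homologous components); in particular, by hypothesis, no such hypersurface carries a metric of positive scalar curvature.

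To produce $\Sigma$ I would use Gromov's $\mu$-bubble construction to circumvent the fact that the area infimum in the class $[N]$ need not be attained on a non-compact manifold. Write $\rho=\dist_g(\cdot,N)$, mollified to $\bar\rho$ with $|\nabla\bar\rho|\le 1+\ve$. For small $\delta>0$ take a potential $h=h_\delta(\bar\rho)$ equal to $\delta$ on a large interior band and decreasing to $-\infty$ near an outer level $\{\bar\rho=L_\delta\}$, and minimize $\mathcal A(\Omega')=\mathcal H^{n-1}(\partial^*\Omega')-\int(\chi_{\Omega'}-\chi_{\Omega_0})h$ over Caccioppoli sets $\Omega'$ constrained to contain a fixed collar of $N$ and to lie in $\{\bar\rho<L_\delta\}$. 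The blow-up of $h$ confines the minimizer to the interior; since $h\equiv\delta>0$ on the band, a minimizer with a point there would be stable with $\tfrac12 h^2+\partial_\nu h\ge\tfrac12\delta^2>0$, hence (by the computation below) its conformal Laplacian would be positive and it would admit positive scalar curvature — impossible by the previous paragraph. Therefore the bubble $\Sigma_\delta$ sits in the transition region, i.e.\ near an almost-minimal slice; letting $\delta\to 0$ and using the uniform area bound (plus a quantitative lower bound on the geometry along the $\Sigma_\delta$ to prevent escape to infinity) one extracts a smooth compact two-sided stable \emph{minimal} hypersurface $\Sigma$ with $[\Sigma]=[N]$, smoothness using $n\le 7$.

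On such a $\Sigma$ the classical argument runs: combining the stability inequality with the Gauss equation and $R_g\ge 0$ gives $\int_\Sigma|\nabla\varphi|^2+\tfrac12 R_\Sigma\varphi^2\ge\int_\Sigma(\tfrac12|A|^2+\tfrac12 R_g)\varphi^2\ge 0$ for all $\varphi$, hence $\lambda_1(-\Delta+\tfrac12 R_\Sigma)\ge 0$; since $\tfrac{4(n-2)}{n-3}\ge 2$ for $n\ge 4$, the first eigenvalue of the conformal Laplacian of $\Sigma$ is then $\ge 0$ as well (for $n=3$, $\Sigma$ is a surface and one uses Gauss--Bonnet instead). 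If it were positive, $\Sigma$ would admit a metric of positive scalar curvature, contradicting the hypothesis; so it vanishes, all inequalities above are equalities, the first eigenfunction is a positive constant, and one reads off that $\Sigma$ is totally geodesic, $R_g\equiv 0$ and $\operatorname{Ric}_g(\nu,\nu)\equiv 0$ along $\Sigma$, and $R_{g|_\Sigma}\equiv 0$. As in the rigidity arguments of Bray--Brendle--Neves and Chodosh--Li, the constancy of the lapse now lets one extend $\Sigma$ to a one-parameter family of totally geodesic hypersurfaces foliating a neighbourhood, each again rigid in the same sense; this foliation cannot terminate at an interior point (the rigidity propagates) nor degenerate (its leaves carry the nontrivial class $[N]$, hence have area bounded below in bounded sets), so it extends to all of $M_\infty$, forces the warping function to be constant, and — reaching the boundary leaf $N$ — exhibits $(M_\infty,g)$ as the Riemannian product $(\partial M_\infty,g|_{\partial M_\infty})\times[0,+\infty)$.

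The main obstacle is the existence-and-compactness step: the whole reason for passing from honest minimal hypersurfaces to $\mu$-bubbles is that a minimizing sequence in the class $[N]$ can lose all its mass to infinity, and one must design the confining potential so that the bubble neither collapses onto the collar of $N$ nor escapes through the outer level, while retaining enough uniform control — an area bound and a lower bound on the geometry along the bubbles — to pass to a genuine compact minimal limit as $\delta\to 0$. A secondary but still delicate point is checking that the foliation generated by the rigid slice neither stops nor ``crunches'' before exhausting $M_\infty$, which is exactly what turns pointwise rigidity along one slice into the global splitting.
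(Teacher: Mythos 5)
Your overall architecture is the same as the paper's: a $\mu$-bubble trapped between the mean-convex boundary and an outer blow-up barrier, stability plus the no-PSC hypothesis to control the bubble, then a limit to a compact area-minimizing slice followed by a foliation/rigidity argument yielding the product. (Your opening homological step via Lefschetz duality is fine, and is a harmless variant of the paper's ray-intersection argument.) But the step you yourself flag as ``the main obstacle'' is a genuine gap, and it is exactly where the paper's proof contains its one non-routine idea. There are two problems. First, with your potential ($h\equiv\delta$ on an interior band, then decreasing to $-\infty$) the positivity $\tfrac{n}{n-1}h^2+2h'>0$ holds only on the band, not on the decreasing transition region; a homologically non-trivial component of the bubble may therefore lie entirely in that transition region, which moves out to infinity with $L_\delta$, so nothing confines the relevant components to a fixed compact set, and your assertion that the bubble ``sits in the transition region near an almost-minimal slice'' is neither justified nor what you want. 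The paper instead takes $h_\epsilon$ solving $\tfrac{n}{n-1}h_\epsilon^2+2h_\epsilon'=n(n-1)\epsilon^2$ all the way up to the blow-up, so that stability plus the no-PSC hypothesis forces some component of every bubble to meet the fixed compact set $K_o=\rho^{-1}([0,\tfrac{1}{2n}])$ near $\partial M_\infty$; this part of your sketch is repairable by choosing $h$ correctly.

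The second problem is more serious and is not repaired by tuning $h$: even with bubbles meeting a fixed compact set and with the uniform bound $\mathcal H^{n-1}(\Sigma^*_\epsilon)\leq\mathcal H^{n-1}(\partial M_\infty)$, the pointed smooth limit as the parameter tends to zero is a priori a complete, possibly non-compact minimal hypersurface; the ``quantitative lower bound on the geometry along the $\Sigma_\delta$ to prevent escape to infinity'' that you invoke does not exist in this generality, and you give no argument for it. The paper's resolution is different and is the crux of its proof: the (possibly non-compact) limit is first shown to be intrinsically Ricci-flat by the rigidity statement for stable limits of these $\mu$-bubbles (\cite[Proposition 3.2]{Zhu23b}, with the degree hypothesis replaced by non-existence of PSC metrics), it inherits the finite area bound, and then the Calabi--Yau theorem (nonnegative Ricci curvature together with finite volume forces compactness; this is precisely why Theorem \ref{Thm: Calabi-Yau} is proved beforehand) yields that the limit is compact. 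Only after compactness is known does one get a compact area-minimizing, homologically non-trivial slice on which the foliation argument (in the style of Bray--Brendle--Neves, i.e.\ \cite[Proposition 3.4]{Zhu20}) produces the splitting. Without this mechanism, or some substitute for it, your construction never produces the compact minimal hypersurface on which your third paragraph operates, so the proposal is incomplete at its central point; the later rigidity and foliation discussion, while loosely stated (eigenvalue zero alone does not give pointwise equalities without using the minimizing property), is otherwise in line with the paper.
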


In the same spirit of the Calabi-Yau theorem, we also mention the Gromov-Lawson theorem below \cite[Theorem 8.11]{GL83}.
\begin{theorem}[Gromov-Lawson]
    Let $\Sigma$ be a complete non-compact Riemannian surface with the property that, for some fixed constant $\alpha>1/2$,
    $$I_\alpha(\varphi):=\int_\Sigma|\nabla\varphi|^2+\alpha K\varphi^2\,\mathrm d\sigma\geq 0\mbox{ for all }\varphi\in C_0^\infty(\Sigma),$$
    where $K$ is the Gaussian curvature of $\Sigma$.
    Then $\Sigma$ has infinite volume.
\end{theorem}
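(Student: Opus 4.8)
\medskip

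The strategy is to argue by contradiction: assume $\mathrm{Area}(\Sigma)=V<\infty$ and exhibit $\varphi\in C_0^\infty(\Sigma)$ with $I_\alpha(\varphi)<0$. Fix $p\in\Sigma$ and set $B_r=B_r(p)$, $A(r)=\mathrm{Area}(B_r)$, $L(r)=\mathcal H^1(\partial B_r)$, so that $A'=L$ for a.e.\ $r$ and $\int_0^\infty L(r)\,\mathrm dr=V<\infty$. I would use only radial test functions $\varphi=f(r(\cdot))$ with $f$ compactly supported; since $|\nabla r|=1$, the coarea formula gives
$$I_\alpha(f\circ r)=\int_0^\infty f'(r)^2 L(r)\,\mathrm dr+\alpha\int_0^\infty f(r)^2 M'(r)\,\mathrm dr,\qquad M(r):=\int_{B_r}K\,\mathrm d\sigma .$$
By Sard's theorem $\partial B_r$ is a smooth curve for a.e.\ $r$, so this is classical, and the problem reduces to controlling the one-variable functions $L$ and $M$ along the ray.

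The key extra input is Gauss--Bonnet. For a.e.\ $r$, the first variation of length and Gauss--Bonnet on $B_r$ give $L'(r)=2\pi\chi(B_r)-M(r)$, with $\chi(B_r)\le 1$ because $B_r$ is connected with nonempty boundary (here one uses that $\Sigma$ is non-compact). Substituting $M=2\pi\chi(B_r)-L'$ into the curvature term and integrating by parts twice yields
$$I_\alpha(f\circ r)=(1-2\alpha)\int_0^\infty (f')^2 L\,\mathrm dr-2\alpha\int_0^\infty ff''L\,\mathrm dr-4\pi\alpha\int_0^\infty ff'\,\chi(B_r)\,\mathrm dr .$$
The hypothesis $\alpha>\tfrac12$ makes the coefficient $1-2\alpha$ \emph{strictly negative}. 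If $\Sigma$ has at least two ends, positive genus, or infinite topology, then $\chi(B_r)\le 0$ eventually (indeed $\to-\infty$ in the last case) and the final integrand has a favourable sign, so the contradiction should come cheaply; the essential case is $\Sigma\cong\mathbb R^2$ topologically, where $\chi(B_r)=1$ for large $r$ and, taking $f$ supported in the end, the topology term disappears, leaving $I_\alpha(f\circ r)=(1-2\alpha)\int (f')^2 L-2\alpha\int ff''L$.

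For such an end the model to keep in mind is the rotationally symmetric metric $\mathrm dr^2+\phi(r)^2\,\mathrm d\theta^2$ with $2\pi\phi=L$: there, restricted to radial functions, $I_\alpha\ge 0$ becomes a one-dimensional weighted Hardy-type inequality on $[r_0,\infty)$ whose sharp constant corresponds precisely to $\alpha=\tfrac12$, so a function concentrated near infinity violates $I_\alpha\ge 0$ \emph{unless} $\phi$ — equivalently $L$ — decays so slowly that $\int_0^\infty L=\infty$, i.e.\ unless the area is infinite. The plan is to carry out a Gauss--Bonnet-enhanced version of this on a general end: from $\int_0^\infty L<\infty$ one has $\liminf_{r\to\infty}rL(r)=0$, so $L$ genuinely pinches, and I would choose $f$ adapted to the pinching profile of $L$ (and to $M$, via $L'=2\pi\chi(B_r)-M$) so that $(2\alpha-1)\int (f')^2 L$ outweighs $2\alpha\int ff''L$ and forces $I_\alpha(f\circ r)<0$. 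I expect the \textbf{main obstacle} to be precisely this last step: the term $\int ff''L$ nearly cancels the gain from $1-2\alpha<0$ wherever $L$ is almost constant on the support of $f$, so the profile must be made to live where $L$ is strictly decreasing, and one should handle separately the finite-total-curvature case (where Huber's theorem and the Cohn--Vossen inequality $\int_\Sigma K\le 2\pi\chi(\Sigma)$ apply, and where additionally finite area makes $\Sigma$ parabolic so that inserting cutoffs $\varphi_k\uparrow 1$ with $\|\nabla\varphi_k\|_{L^2}\to 0$ into $I_\alpha\ge 0$ already yields $\int_\Sigma K\ge 0$) and the infinite-total-curvature case (where the Hardy-type analysis above is essential). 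Regularity of distance spheres and the non-smooth locus of $r(\cdot)$ are handled routinely by Sard's theorem and approximation.
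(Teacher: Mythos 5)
There is a genuine gap: your write-up stops exactly at the step that \emph{is} the theorem. The reduction to radial test functions via the coarea formula, the Gauss--Bonnet substitution, and the algebra producing the coefficient $1-2\alpha$ are routine bookkeeping; the entire content of the statement is the claim that $\int_0^\infty L(r)\,\mathrm dr<\infty$ forces the existence of a profile $f$ with $I_\alpha(f\circ r)<0$ once $\alpha>1/2$. You defer precisely this (``I would choose $f$ adapted to the pinching profile of $L$\dots'', ``I expect the main obstacle to be precisely this last step'') without constructing $f$ or proving that the borderline one-dimensional weighted (Hardy-type) inequality fails under the sole hypothesis of finite area: note that $\liminf rL(r)=0$ gives no control on $L'$ or on the oscillation of $L$, so the competition between $(2\alpha-1)\int (f')^2L$ and $2\alpha\int ff''L$ is left unresolved, and no contradiction is ever actually reached in any case. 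Even the branches you present as cheap are not closed: in the finite-total-curvature/parabolic case your cutoff argument only yields $0\le\int_\Sigma K\le 2\pi\chi(\Sigma)$, which is no contradiction when $\chi(\Sigma)=0$ or $1$ --- in particular not in the essential case $\Sigma\cong\mathbb R^2$; and when $\chi(B_r)\le 0$ the term $-4\pi\alpha\int ff'\chi(B_r)\,\mathrm dr$ is only ``favourably signed'' where $ff'\ge 0$, which fails on half of an annular support.

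There are also problems in the reduction itself. The distance function $r$ is merely Lipschitz, so Sard's theorem does not make $\partial B_r$ smooth for a.e.\ $r$; beyond the injectivity radius the cut locus is present for every $r$. The correct statement is the Fiala--Hartman one: $L$ is only a.e.\ differentiable (locally BV, with possible singular negative part), and one has the \emph{inequality} $L'(r)\le 2\pi\chi(B_r)-\int_{B_r}K$, not the equality you use. Consequently the substitution $M=2\pi\chi(B_r)-L'$ is one-sided, and preserving the direction of the inequality in $-2\alpha\int ff'M\,\mathrm dr$ requires a sign on $ff'$ that an annular bump does not have; likewise the second integration by parts $2\alpha\int ff'L'=-2\alpha\int(ff')'L$ needs absolute continuity of $L$ or a sign argument for the singular part. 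These points are fixable with the standard Fiala--Hartman/BV machinery, but together with the missing test-function construction they mean the proposal is a plan rather than a proof. (For calibration: the paper does not prove this statement at all --- it quotes it as Theorem 8.11 of \cite{GL83} --- so your note should be judged as a standalone argument, and as such its decisive analytic step, the sharp $\alpha=1/2$ threshold under finite area, is missing.)
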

We remark that the nonnegative curvature condition is weakened from the pointwise sense to some spectrum sense in above Gromov-Lawson theorem, which seems even not to be well understood in the context of Ricci curvature. Concerning our Theorem \ref{Thm: main} it is very natural to ask the following
\begin{question}
    Let $\alpha$ be a positive constant greater than $1/4$. Let $(M^n,g)$, $n\geq 3$, be a complete Riemannian manifold with
    $$
    \int_{M}|\nabla_g\varphi|^2+\alpha R(g)\varphi^2\,\mathrm d\mu_g\geq 0\mbox{ for all }\varphi\neq 0\in C_0^\infty(M),
    $$
    which is aspherical at infinity. If $(M,g)$ has finite volume, does it have to be compact?
\end{question}

For further application of Lemma \ref{Lem: concave} we mention the following 
\begin{question}\label{Q: contractible}
Is any complete contractible $3$-manifold with positive scalar curvature necessarily homeomorphic to $\mathbb R^3$?
\end{question}
This question was considered by Wang in his works \cite{Wang19+,Wang23+}, where he proved that if $(M,g)$ is a complete contractible $3$-manifold with positive scalar curvature, then its fundamental group at infinity must be trivial. As a special case of Question \ref{Q: contractible}, we are able to prove
\begin{theorem}\label{Thm: topology}
    Let $(M^3,g)$ be a complete and contractible Riemannian $3$-manifold with positive scalar curvature and sublinear volume growth. Then $M$ is homeomorphic to $\mathbb R^3$.
\end{theorem}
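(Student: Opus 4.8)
The plan is to turn the existence of arbitrarily large weakly mean-concave bounded regions (Lemma~\ref{Lem: concave}) into an exhaustion of $M$ by $3$-balls, and then invoke Morton Brown's theorem that a monotone union of open $n$-cells is an open $n$-cell. I would first collect the topological input: since $M^3$ is contractible it is orientable, it has exactly one end (because $H^1_c(M)=0$), and — by the sphere theorem together with the resolution of the Poincar\'e conjecture — it is irreducible, so every embedded $2$-sphere in $M$ separates $M$ into a compact piece, which is a $3$-ball, and a noncompact piece. These facts will be used freely below.

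Fix $p\in M$ and a rapidly increasing sequence $r_k\to\infty$, and apply Lemma~\ref{Lem: concave} inductively to the bounded regions $\overline{\Omega_{k-1}}\cup B_{r_k}(p)$ to obtain bounded regions $\Omega_0\Subset\Omega_1\Subset\Omega_2\Subset\cdots$ with $\bigcup_k\Omega_k=M$, with each $\partial\Omega_k$ weakly mean-concave with respect to the outer unit normal of $\Omega_k$, and (after absorbing bounded complementary components, using that $M$ is one-ended) with each $\Omega_k$ connected and $M\setminus\Omega_k$ connected. Consider the compact band $N_k:=\overline{\Omega_k\setminus\Omega_{k-1}}$: with respect to the outward normal of $N_k$, the inner face $\partial\Omega_{k-1}$ is weakly mean-convex while the outer face $\partial\Omega_k$ is weakly mean-concave. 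Inside $N_k$ I would solve a $\mu$-bubble problem in the sense of Gromov \cite{Gromov23}: minimise
$$\mathcal A(\Omega')=\mathcal H^{2}\bigl(\partial^*\Omega'\cap\mathrm{int}\,N_k\bigr)-\int_{N_k}(\chi_{\Omega'}-\chi_{\Omega_{k-1}})\,h_k\,d\mathcal H^{3}$$
over Caccioppoli sets $\overline{\Omega_{k-1}}\subseteq\Omega'\subseteq\Omega_k$, where $h_k$ is chosen so that the mean-convex inner face repels $\partial\Omega'$ from $\partial\Omega_{k-1}$ and $h_k\to-\infty$ near $\partial\Omega_k$, forcing the minimiser to omit a collar of $\partial\Omega_k$; taking $r_k$ large enough makes the band so wide that $h_k$ can additionally be kept small in $C^1$ on its bounded part — measured against $R(g)$, which stays bounded below by a positive constant near the fixed piece $\partial\Omega_{k-1}$ — so that the stability inequality closes. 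Since $n=3\le 7$, the minimiser $W_k:=\Omega'$ is a compact region with $\overline{\Omega_{k-1}}\subseteq W_k\subseteq\Omega_k$ whose boundary $\Sigma^{(k)}:=\partial W_k$ is a smooth closed embedded surface lying strictly inside $N_k$, hence separating $\partial\Omega_{k-1}$ from $\partial\Omega_k$; feeding the stability inequality for $\mu$-bubbles into the Gauss equation and Gauss--Bonnet, and using $R(g)>0$, one concludes that each component of $\Sigma^{(k)}$ has positive Euler characteristic, i.e.\ is a $2$-sphere.

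It remains to extract the exhaustion by balls. Put $V_k:=W_k\cup(\text{bounded components of }M\setminus W_k)$, so that $\overline{\Omega_{k-1}}\subseteq V_k\subseteq\overline{\Omega_k}$; then $\{V_k\}$ is a monotone exhaustion of $M$ with $\mathrm{int}\,V_k\subseteq\mathrm{int}\,V_{k+1}$. By one-endedness, $M\setminus V_k$ is the unique noncompact component of $M\setminus W_k$, so $\partial V_k$ consists of some of the $2$-spheres forming $\Sigma^{(k)}$; by irreducibility each such sphere bounds a $3$-ball on its compact side, and since $M\setminus V_k$ is disjoint from all of these balls one checks that each of them coincides with $V_k$, forcing $\partial V_k$ to be a single $2$-sphere and $V_k$ a $3$-ball. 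Hence $M=\bigcup_k\mathrm{int}\,V_k$ is a monotone union of open $3$-cells, and Brown's theorem gives $M\cong\mathbb R^3$.

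The main obstacle is the geometric core of the second paragraph: choosing the prescribing function $h_k$ so that the associated $\mu$-bubble simultaneously (a) has boundary strictly interior to $N_k$ — which demands that the weak mean-convexity/concavity of $\partial\Omega_{k-1}$ and $\partial\Omega_k$ be compatible with the signs and with the blow-up of $h_k$ — (b) is homologically essential rather than collapsing onto $\partial\Omega_{k-1}$ or $\partial\Omega_k$, and (c) obeys the rigidity alternative forcing its components to be spheres. The delicate point in (c) is that the blow-up of $h_k$ required in (a) degrades the term $\tfrac12 h_k^{2}+\partial_\nu h_k$ in the stability inequality, and controlling it is precisely where the ``arbitrarily large'' in Lemma~\ref{Lem: concave} must be played off against the positivity of $R(g)$ — in contrast with the noncompact minimal hypersurface available from \cite{CL20,Song23}, which cannot be localised in a compact band. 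The remaining ingredients — irreducibility of contractible $3$-manifolds, one-endedness, and the passage from $W_k$ to $V_k$ — are routine once the $\mu$-bubble step is in place.
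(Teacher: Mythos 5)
Your argument is essentially correct, but it reaches the conclusion by a genuinely different route than the paper. The paper does not build an exhaustion by balls: it fixes one compact set $K$ witnessing a hypothetical failure of simple connectivity at infinity, produces a single large region $\tilde K$ with spherical boundary by running the $\mu$-bubble of Proposition \ref{Prop: splitting} on each unbounded end of $M\setminus\Omega$ (with $\Omega$ from Lemma \ref{Lem: concave}), and then caps off the sub-disks of a contracting disk $D$ inside the spherical boundary $\partial\tilde K$ to contradict the assumption; the topological endgame is simple connectivity at infinity plus Stallings \cite{Sta72}. You instead localize the $\mu$-bubble in the compact band between two consecutive weakly mean-concave boundaries produced by Lemma \ref{Lem: concave}, obtain separating spheres, upgrade them to an exhaustion by $3$-balls via irreducibility (van Kampen plus Poincar\'e), and finish with Brown's monotone-union-of-cells theorem. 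What your route buys is that it avoids the disk-capping/transversality argument and makes the exhaustion structure explicit; what the paper's route buys is that the $\mu$-bubble lives on a noncompact end, so no width bookkeeping is needed and the obstacle only appears on the fixed weakly mean-convex face.

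Two points in your band construction deserve more care, though neither is fatal. First, the inner face does not strictly ``repel'' the minimizer: with $h_k\equiv 0$ near $\partial\Omega_{k-1}$ and $\partial\Omega_{k-1}$ only \emph{weakly} mean-concave, the variational inequality plus the strong maximum principle only gives the dichotomy that a boundary component of $W_k$ either avoids $\partial\Omega_{k-1}$ or coincides with a whole component of it; you should note that in the latter case the component is still a smooth closed surface satisfying the stability inequality for one-signed (hence, by the $|\phi|$ trick, all) test functions, so the Gauss--Bonnet argument still yields a sphere. Second, the ``width versus $R(g)$'' trade-off must be set up in the right inductive order: after $\Omega_{k-1}$ is fixed, let $\delta_{k-1}>0$ be a lower bound for $R(g)$ on a unit collar of $\partial\Omega_{k-1}$; use that collar to bring $h_k$ down to a definite negative value $-a(\delta_{k-1})$ while keeping $\tfrac{3}{2}h_k^2+2h_k'\geq-\delta_{k-1}/2$, and beyond the collar run the Gromov profile with $\tfrac{3}{2}h_k^2+2h_k'\geq 0$, which blows down to $-\infty$ within width $C/a(\delta_{k-1})$; only \emph{then} choose $r_k$ (and hence $\Omega_k$ via Lemma \ref{Lem: concave}) so that the band is at least that wide. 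In particular no lower bound on $\inf_{N_k}R$ is needed away from the fixed collar, which is exactly why the argument survives scalar curvature decaying at infinity; this is the same mechanism as the profile $h_\epsilon$ used in the paper's proof of Proposition \ref{Prop: splitting}.
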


\section*{Acknowledgement}
The author is grateful to Prof. Chao Li for inspiring conversations. He also thianks Dr. Jian Wang and Dr. Liman Chen for helpful suggestions. The author is partially supported by National Key R\&D Program of China with grant no. 2020YFA0712800 and 2023YFA1009900 as well as the start-up fund from Westlake University.

\section{The proof}
\begin{proof}[Proof of Lemma \ref{Lem: concave}]
    By enlarging the region $K$ we may assume it to be a smooth connected and bounded region such that every component of $M\setminus K$ is unbounded. In the following, we just focus on one fixed component of $M\setminus K$, denoted by $E$. Two things need to be handled in the search of a bounded and weakly mean-concave region $\hat\Omega$ containing $K$ using variational method, including the obstacle issue caused by the existence of inner boundary $\partial E$ and a possible loss of compactness due to the non-compactness of $E$.
    
The obstacle issue is overcome by inserting a separation band $V$, from which we determine certain geometric quantities for later use in setting the $\mu$-bubble problem.  Fix a bounded open neighborhood $U\subset M$ of $K$ and we take the separation band 
$V$ to be $(\overline U\setminus K)\cap E$. For convenience, we make the illustration of the separation band $V$ in Figure \ref{Fig: separation band}, where the boundary $\partial V$ can be divided into two parts $\partial E$ and $\partial U$ respectively. To clarify we point out that both $\partial E$ and $\partial U$ can be disconnected in its worst case.

 \begin{figure}[htbp]
\centering
\includegraphics[width=6cm]{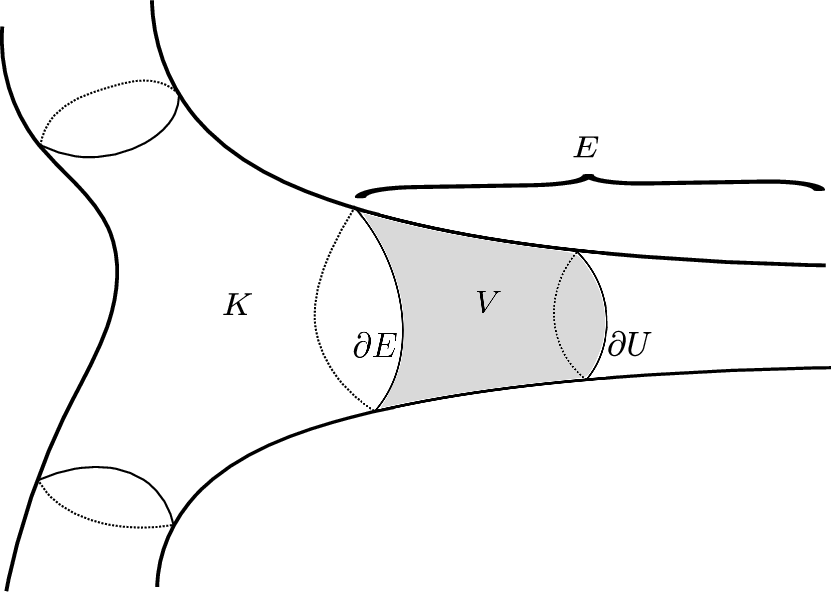}
\caption{The separation band $V$}
\label{Fig: separation band}
\end{figure}

Let us introduce two positive constants associated to the separation band $V$. The first constant is designed to describe the infimum area that an arbitrary homologically non-trivial hypersurface can have. For the definition we denote $\mathcal C_V$ to be the collection
$$
\left\{
\begin{array}{c}
	\mbox{integer multiplicity rectifiable $(n-1)$-currents $T$ with $\spt T\subset V$}\\\mbox{and $\partial T=0$ such that $T$ is homologically non-trivial}
\end{array}\right\}
$$
and take
$$
c_{gap}=\inf\left\{\mathbb M_g(T):T\in \mathcal C_V
\right\}.
$$
Here we recommend the audience to consult the book \cite{Leo83} for the precise definitions for current $T$ and mass $\mathbb M_g(T)$.  For our purpose, it is enough to consider currents as oriented hypersurfaces and mass as their areas. The second constant is used to measure the height of $V$ in the sense of area, which is defined by
$$
c_{height}=\inf\left\{\mathcal H^{n-1}_g(\Sigma)\left|
\begin{array}{c}
	\mbox{$\Sigma\subset V$ is a connected minimal hypersurface}\\\mbox{with $\partial\Sigma\subset \partial V$ intersecting both $\partial E$ and $\partial U$}
\end{array}\right.\right\}.
$$

Now let us verify the positivity of the constants $c_{gap}$ and $c_{height}$ as claimed.
To see that $c_1$ is positive, we take a conformal metric $\check g$ of $g$ such that
\begin{itemize}
\item $c^{-2}g\leq \check g\leq c^2g$ for some positive constant $c$;
\item and $(V,\check g)$ is a compact Riemannian manifold with convex boundary.
\end{itemize} 
From geometric measure theory we can find a smooth hypersurface $\check\Sigma$ which attains the least $\check g$-area among all the currents in $\mathcal C_V$. In particular, we have
$$
c_{gap}\geq c^{1-n}\cdot \mathcal H_{\check g}^{n-1}(\check\Sigma)
>0.$$
For the positivity of $c_2$ we take a smooth hypersurface $\Sigma_{sep}$ separating $\partial E$ and $\partial U$. Let $\Sigma\subset V$ be any connected minimal hypersurface with $\partial\Sigma\subset \partial V$ intersecting both $\partial E$ and $\partial U$. Clearly, $\Sigma$ must have non-empty intersection with $\Sigma_{sep}$ and so it follows from the monotonicity formula that 
$$
\mathcal H_g^{n-1}(\Sigma)\geq c_0(V,g,\dist(\Sigma_{sep},\partial V))>0,
$$
which yields $c_{height}>0$. 

At this stage we are ready to set the $\mu$-bubble problem to overcome the non-compactness issue and to find the desired bounded and weakly mean-concve region $\hat\Omega$. Recall that $(M,g)$ has sublinear volume growth. By definition we can find a sequence of positive constants $r_i\to+\infty$ such that for some point $p$ we have
$$
\frac{\vol_g(B_{r_i}(p))}{r_i}\to 0\mbox{ as }i\to\infty.
$$
Fix a smooth and proper function $\rho:E\to[0,+\infty)$ with $\rho^{-1}(0)=\partial E$ and $\Lip\rho\leq 2$ as well as $|\rho(\cdot)-\dist(\cdot,\partial E)|\leq 1$. Then it is not difficult to verify
$$
r_i^{-1}\vol_g\left(\rho^{-1}\left(\left[0,\frac{r_i}{2}\right]\right)\right)\to 0\mbox{ as }i\to\infty.
$$
From the co-area formula we have
$$
\int_{\frac{r_i}{4}}^{\frac{r_i}{2}}\mathcal H_g^{n-1}(\{\rho=\tau\})\,\mathrm d\tau=\int_{\{r_i/4\leq \rho\leq r_i/2\}}|\mathrm d\rho|_g\,\mathrm d\mathcal H^n_g\to 0\mbox{ as }i\to\infty.
$$
Then it follows that for each $i$ we can find a regular value $r_i/4<t_i<r_i/2$ of $\rho$ such that
$
\mathcal H^{n-1}_g(\{\rho=t_i\})\to 0\mbox{ as } i\to\infty.
$
In particular, we can fix $t_i$ large enough such that $V\subset\{\rho<t_i\}$ and
$$\mathcal H^{n-1}_g(\{\rho=t_i\})<\min\{c_{gap},c_{height}\}.$$
Just take another regular value $s_i>t_i$ of $\rho$ casually and denote $W$ to be $\rho^{-1}([0,s_i])$. In the following, we take $\eta:[0,s_i)\to(-\infty,0]$ to be a smooth function such that $\eta\leq 0$ everywhere, $\eta\equiv 0$ in $[0,t_i]$ and $\eta(s)\to-\infty$ as $s\to s_i$. The prescribed mean curvature function is now defined by $h=\eta\circ \rho$. 

Consider the class
$$
\mathcal C=\left\{
\begin{array}{c}
	\mbox{Caccioppoli sets $\Omega\subset M$ such that}\\\mbox{$E^c\subset \Omega$ and $\Omega\setminus E^c\Subset W$}
\end{array}\right\}
$$
and the functional
$$
\mathcal A^h(\Omega)=\mathcal H^{n-1}_g(\partial^*\Omega)-\int_{\Omega\setminus E^c}h\,\mathrm d\mathcal H^n_g,
$$
where $\partial^*\Omega$ is denoted to be the reduced boundary from \cite[Definition 3.3]{Giusti1977}.
Through a standard argument from geometric measure theory we can find a minimizer $\Omega^*\in \mathcal C$ such that
$$
A^h(\Omega^*)=\inf_{\Omega\in\mathcal C}\mathcal A^h(\Omega)
$$
and that $\Omega^*\cap \mathring E$ is a region in $\mathring E$ with smooth boundary $\partial\Omega^* \cap \mathring E$, where $\mathring E$ is denoted to be the interior of $E$.
Denote $U^*_1,\ldots,U^*_k$ to be the unbounded components of $M\setminus \Omega^*$ and $\hat\Omega_E$ to be the unique unbounded component of $M\setminus \left(\cup_iU^*_i\right)$ (the uniqueness comes from the connectedness of $K$). It is not difficult to verify $M\setminus \hat\Omega_E\subset E$. We claim that $\partial\hat\Omega_E$ is smooth and weakly mean-concave with respect to the outer unit normal as the boundary of $\hat\Omega_E$. Let $\hat\Sigma$ be an arbitrary component of $\partial\hat\Omega_E$. Notice that $\hat\Sigma$ is a common boundary of two unbounded connected regions $\hat\Omega_E$ and some $U^*_i$, then it has to be homologically non-trivial since we can construct a line intersecting $\hat\Sigma$ only once. Notice that the set $E^c \cup \rho^{-1}([0,t_i])$ also belongs to the class $\mathcal C$ and it follows from a direct comparison that
\begin{equation}\label{Eq: 1}
\mathcal H^{n-1}_g(\hat\Sigma)\leq\mathcal A^h(\Omega^*)\leq \mathcal H^{n-1}_g(\{\rho=t_i\})<\min\{c_{gap}, c_{height}\}.
\end{equation}
In particular, by definition of $c_{gap}$ we conclude that $\hat\Sigma\cap(E\setminus V)$ is non-empty. From the first variation formula of $\mathcal A^h$ it follows that $\hat\Sigma\cap\mathring E$
as the boundary of $\Omega^*$ has mean curvature $h|_{\hat\Sigma\cap\mathring E}$ with respect to the outer unit normal. In particular, $\hat\Sigma\cap V$ is a minimal hypersurface. If $\hat\Sigma$ intersects with $\partial E$, then we can find a connected minimal hypersurface $\hat\Sigma_0$ among components of $\hat\Sigma\cap V$ satisfying $\partial\hat\Sigma_0\subset \partial V$ and that $\hat\Sigma$ intersects both $\partial_-$ and $\partial_+$. This implies $\mathcal H^{n-1}_g(\hat\Sigma)\geq \mathcal H^{n-1}_g(\hat\Sigma_0)\geq c_{height}$, which contradicts to \eqref{Eq: 1}. Therefore, $\hat\Sigma$ does not touch $\partial E$ and so it is smooth everywhere. The mean-concavity of $\hat\Sigma$ comes directly from the non-positivity of the function $h$ after realizing that the outer unit normals of $\hat\Sigma$ with respect to $\Omega^*$ and $\hat\Omega$ coincide.

Finally let us take all unbounded components of $M\setminus K$ into consideration. After labeling them as $E_1,E_2,\ldots,E_l$ we can find regions $\hat\Omega_{E_1},\ldots,\hat\Omega_{E_l}$ with smooth weakly mean-concave boundary from above discussion. The desired bounded region is given by
$$
\hat\Omega=\bigcap_{i=1}^l\hat\Omega_i,
$$
which is obviously a bounded region containing $K$ with weakly mean-concave boundary.
\end{proof}
To be self-contained we would like to provide an alternative proof for the Calabi-Yau theorem, which will be used in the proof of Proposition \ref{Prop: splitting}. Of course, due to the use of geometric measure theory the dimension is assumed to be no greater than seven.
\begin{theorem}[Calabi-Yau]\label{Thm: Calabi-Yau}
Let $(M^n,g)$, $n\leq 7$, be a complete Riemannian manifold with nonnegative Ricci curvature outside a compact subset. Then $(M,g)$ has sublinear volume growth if and only if $M$ is compact.
\end{theorem}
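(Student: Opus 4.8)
The plan is to prove the trivial implication directly and to reduce the substantive implication to Lemma~\ref{Lem: concave} together with the splitting theorem \cite[Theorem 2]{CK92}. If $M$ is compact then $\vol_g(B_r(p))\le\vol_g(M)<+\infty$ for every $r$, hence $\vol_g(B_r(p))/r\to 0$ and $(M,g)$ has sublinear volume growth; no curvature hypothesis is used here. For the converse I would assume that $(M,g)$ has sublinear volume growth and that $M$ is non-compact, and derive a contradiction; we may assume $M$ connected, and we fix a compact set $C\subset M$ outside which the Ricci curvature is nonnegative.

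The first step is to apply Lemma~\ref{Lem: concave} to a bounded region $K$ chosen with $C\subset K$ --- this choice is what will guarantee nonnegative Ricci curvature on the whole region produced below, and $n\le 7$ is exactly the range in which the lemma is available. This yields a bounded region $\hat\Omega\supset K$ with smooth compact boundary which is weakly mean-concave with respect to the outer unit normal of $\partial\hat\Omega$ in $\hat\Omega$. The second step is to pass to the complement $N:=\overline{M\setminus\hat\Omega}$. It is a complete Riemannian manifold with compact smooth boundary $\partial N=\partial\hat\Omega$; it is disjoint from $C$, so it has nonnegative Ricci curvature; and, since the outer normal of $\hat\Omega$ is the inner normal of $N$ and reversing the normal reverses the sign of the mean curvature, its boundary is weakly mean-\emph{convex} with respect to the outer normal of $N$. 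Because $M$ is non-compact while $\hat\Omega$ is bounded, at least one component $N_0$ of $N$ is non-compact; its boundary $\partial N_0\subset\partial\hat\Omega$ is compact, weakly mean-convex, and non-empty --- a boundaryless component of $N$ would be open and closed in the connected manifold $M$, hence all of $M$, which is impossible since $\hat\Omega\neq\emptyset$.

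The third step is to invoke \cite[Theorem 2]{CK92}: a non-compact complete Riemannian manifold with nonnegative Ricci curvature and compact weakly mean-convex boundary splits isometrically as the Riemannian product of its boundary with $[0,+\infty)$. Applied to $N_0$ this gives $N_0\cong(\partial N_0,g|_{\partial N_0})\times[0,+\infty)$; since $\partial N_0$ is compact, the product structure forces $\vol_g(B_r(p))\ge\mathcal H^{n-1}_g(\partial N_0)\,(r-c_0)$ for all large $r$ and a constant $c_0$, hence $\liminf_{r\to+\infty}\vol_g(B_r(p))/r\ge\mathcal H^{n-1}_g(\partial N_0)>0$. This contradicts the sublinear volume growth of $(M,g)$, and therefore $M$ is compact.

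I expect this deduction to be genuinely short once Lemma~\ref{Lem: concave} and the cited splitting theorem are granted: the substantive content has been front-loaded into Lemma~\ref{Lem: concave}, which is exactly what replaces the Bishop--Gromov volume comparison of the classical argument by a variational construction of a large bounded region with the right boundary convexity. The only points that need care are the orientation bookkeeping for the unit normal when passing from $\hat\Omega$ to its complement, and the choice $C\subset K$ ensuring that nonnegative Ricci curvature holds on all of $N_0$. Structurally this is the same scheme --- Lemma~\ref{Lem: concave} plus a splitting theorem for manifolds with compact mean-convex boundary --- that, with Proposition~\ref{Prop: splitting} in place of \cite[Theorem 2]{CK92}, is meant to yield Theorem~\ref{Thm: main}.
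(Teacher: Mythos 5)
Your proposal is correct and follows essentially the same route as the paper's alternative proof: apply Lemma~\ref{Lem: concave} to a bounded region containing the compact set where Ricci may be negative, observe that the weakly mean-concave boundary of $\hat\Omega$ becomes weakly mean-convex as the boundary of an unbounded component of the complement, and invoke \cite[Theorem 2]{CK92} to split that end as a product with $[0,+\infty)$, whose linear volume growth contradicts the sublinear hypothesis. Your extra bookkeeping (the trivial direction, the normal-orientation check, and choosing $K\supset C$) only makes explicit what the paper leaves implicit.
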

\begin{proof}[An alternative proof without using volume comparison]
    Denote $K$ to be a smooth compact subset such that $(M,g)$ has nonnegative Ricci curvature outside $K$. Suppose that $(M,g)$ is non-compact but has sublinear volume growth. Then it follows from Lemma \ref{Lem: concave} that there is a bounded smooth region $\hat\Omega$ containing $K$ with weakly mean-concave boundary. Notice that $\partial\hat\Omega$ is weakly mean-convex as the boundary of $M\setminus\hat \Omega$ with respect to the corresponding outer unit normal. Since $M$ is non-compact, there is at least one unbounded component of $M\setminus\hat \Omega$ denoted by $E$. After applying \cite[Theorem 2]{CK92} to the end $E$ we conclude that $E$ must be isometric to the Riemannian product $(\partial E,g|_{\partial E})\times [0,+\infty)$, which obviously has linear volume growth. This leads to a contradiction!
\end{proof}

In the following, let us establish the splitting result which is involved in the proof of our main theorem.
\begin{proof}[Proof of Proposition \ref{Prop: splitting}]
    Here we take a similar argument from the author's previous work \cite{Zhu23b} based on Gromov's $\mu$-bubble. Let $\rho:M_\infty \to [0,+\infty)$ be a smooth proper function satisfying $\rho^{-1}(0)=\partial M_\infty$ and $\Lip \rho<1$. From \cite[Lemma 2.3]{Zhu23b} we can construct a smooth function $h_{\epsilon}:[0,\frac{1}{n\epsilon})\to(-\infty,0]$ for any $0<\epsilon<1$ such that
    \begin{itemize}
    \item $h_\epsilon$ satisfies
    $$
    \frac{n}{n-1}h_\epsilon^2+2h_\epsilon'=n(n-1)\epsilon^2\mbox{ on }\left[\frac{1}{2n},\frac{1}{n\epsilon}\right)
    $$
    and there is a universal constant $C=C(n)$ such that
    $$
    \left|\frac{n}{n-1}h_\epsilon^2+2h_\epsilon\right|_{L^\infty}\leq C\epsilon^2.
    $$
    \item $h_\epsilon<0$ and
    $$
    \lim_{t\to\frac{1}{n\epsilon}}h_\epsilon(t)=-\infty.
    $$
    \item as $\epsilon\to 0$, $h_\epsilon$ converges smoothly to the zero function on any closed interval.
    \end{itemize}
    
    Now we are ready to set appropriate $\mu$-bubble problems. Take $\epsilon$ such that $\frac{1}{n\epsilon}$ appears to be a regular value of $\rho$ and denote $V_\epsilon=\rho^{-1}([0,\frac{1}{n\epsilon}])$. Consider the class
    $$
    \mathcal C_\epsilon=\{\mbox{Caccioppoli sets $\Omega$ such that $M_\infty\setminus\Omega\Subset V_\epsilon$}\}
    $$
    and the functional
    $$
    \mathcal A_\epsilon(\Omega)=\mathcal H^{n-1}_g(\partial^*\Omega)-\int_{M_\infty\setminus\Omega}h_\epsilon\circ \rho\,\mathrm d\mathcal H^n_g.
    $$
    It follows from \cite[Proposition 2.1]{Zhu21} or \cite[Proposition 12]{CL20+} that we can find a smooth minimizer $\Omega^*_\epsilon$ of the functional $\mathcal A_\epsilon$ among the class $\mathcal C_\epsilon$. From a direct comparison we see
    $$
    \mathcal H^{n-1}_g(\partial\Omega^*_\epsilon)\leq \mathcal H^{n-1}_g(\partial\Omega^*_\epsilon)-\int_{M_\infty\setminus \Omega^*_\epsilon}h_\epsilon\circ \rho\,\mathrm d\mathcal H^n_g\leq \mathcal H^{n-1}_g(\partial M_\infty).
    $$
    The first variation formula of $\mathcal A_\epsilon$ yields that the mean curvature of $\partial\Omega^*_\epsilon$ as the boundary of $\Omega^*_\epsilon$ with respect to the unit outer normal $\nu^*$ is
    $$H_{\partial\Omega^*_\epsilon}=-(h_\epsilon\circ \rho)|_{\partial\Omega^*_\epsilon}.$$
    From the second variation formula of $\mathcal A_\epsilon$ we have
    \begin{equation}\label{Eq: 2}
    \begin{split}
    \int_{\partial\Omega^*_\epsilon}|\nabla\phi|^2-\frac{1}{2}&\Big(R(g)-R(g|_{\partial\Omega^*_\epsilon})+|A|^2\\
    &\qquad\qquad+H^2-2\partial_{\nu^*}(h_\epsilon\circ \rho)\Big)\phi^2\,\mathrm d\mathcal H^{n-1}_g\geq 0
    \end{split}
    \end{equation}
    for every $\phi$ in $C_0^\infty(\partial\Omega^*_\epsilon)$. Using the facts $R(g)\geq 0$ and
    $$
    |A|^2+H^2\geq \frac{n}{n-1}(h_\epsilon\circ \rho)^2
    $$
    as well as $\partial_{\nu^*}(h_\epsilon\circ \rho)\leq |\mathrm dh_\epsilon|\circ \rho$, we can write \eqref{Eq: 2} as
    \begin{equation}
    \begin{split}
     \int_{\partial\Omega^*_\epsilon}|\nabla\phi|^2+&\frac{1}{2}R(g|_{\partial\Omega^*})\phi^2\,\mathrm d\mathcal H^{n-1}_g\\
     &\geq \int_{\partial\Omega^*_\epsilon} \left.\left(\left(\frac{n}{n-1}h_\epsilon^2+2h_\epsilon'\right)\circ \rho \right)\right|_{\partial\Omega^*_\epsilon}\phi^2\,\mathrm d\mathcal H^{n-1}_g.
     \end{split}
    \end{equation}
    We claim that there is at least one component $\Sigma^*_\epsilon$ of $\partial\Omega_\epsilon^*$ having non-empty intersection with the compact subset $K_o:=\rho^{-1}([0,\frac{1}{2n}])$. Otherwise, $\Omega^*_\epsilon$ has to be disjoint with $K_o$ and in particular we can take $V$ to be the component of $M_\infty\setminus\Omega^*_\epsilon$ containing $K_o$. Let $\Sigma^*_\epsilon$ be some common boundary component of $\Omega^*_\epsilon$ and $V$. It is easy to construct a ray $\gamma:\big([0,+\infty),0\big)\to (M_\infty,\partial M_\infty)$ intersecting only once with $\Sigma^*_\epsilon$, which implies that $\Sigma^*_\epsilon$ represents a non-trivial $\mathbb Q$-homology class. From our assumption $\Sigma^*_\epsilon$ cannot admit any metric with positive scalar curvature. On the other hand, since $\Sigma^*_\epsilon$ as part of $\partial\Omega^*_\epsilon$ is disjoint from $K_o$, we conclude 
    $$
     \int_{\Sigma^*_\epsilon}|\nabla\phi|^2+\frac{1}{2}R(g|_{\partial\Omega^*})\phi^2\,\mathrm d\mathcal H^{n-1}_g>0\mbox{ for any non-zero }\phi\in C_0^\infty(\hat\Sigma^*_\epsilon).
    $$
    In particular, the first eigenvalue of the conformal Laplacian of $\Sigma^*_\epsilon$ is positive and we can construct a conformal metric of $\Sigma^*_\epsilon$ with positive scalar curvature, which leads to a contradiction.
    
    Now we analyze the limiting behavior of $\Omega^*_\epsilon$ as $\epsilon\to 0$. Notice that the functional $\mathcal A_\epsilon$ converges to the area functional $\mathcal A$ as $\epsilon\to 0$. From geometric measure theory up to a subsequence $\Omega^*_\epsilon$ converges to a (possibly empty) Caccioppoli set $\Omega^*_0$ whose boundary is locally area-minimizing. On the other hand, all the hypersurfaces $\Sigma^*_\epsilon$ intersect with a fixed compact subset $K_o$ and they have a uniform area bound
    $$
    \mathcal H^{n-1}_g(\Sigma^*_\epsilon)\leq \mathcal H^{n-1}_g(\partial\Omega^*_\epsilon)\leq H^{n-1}_g(\partial M_\infty).
    $$
    Fixed a point $q^*_\epsilon$ in $\Sigma^*_\epsilon\cap K_o$, the curvature estimate \cite[Theorem 3.6]{ZZ20} yields that the pointed hypersurface $(\Sigma^*_\epsilon,q^*_\epsilon)$ converges smoothly to a pointed minimal hypersurface $(\Sigma^*_0,q^*_0)$ up to a subsequence, which is part of the boundary $\partial\Omega^*_0$. It follows from \cite[Proposition 3.2]{Zhu23b} (with the original condition having non-zero degree to $T^n$ replaced by non-existence of positive scalar curvature) that the limit hypersurface $\Sigma^*_0$ must have vanishing Ricci curvature. With the uniform area bound passing to the limit we see $\mathcal H^{n-1}_g(\Sigma^*_0)\leq \mathcal H^{n-1}_g(\partial M_\infty)$. Combined with the Calabi-Yau theorem (see Theorem \ref{Thm: Calabi-Yau} above) we conclude that the limit hypersurface $\hat\Sigma^*_0$ must be compact. As a consequence, $\Sigma^*_0$ represents a non-zero $\mathbb Q$-homology class and it is area-minimizing as a boundary component of $\Omega^*_0$. Now it follows from the foliation argument as in \cite[Proposition 3.4]{Zhu20} that $(M_\infty,g)$ splits into the Riemannian product $(\partial M_\infty,g|_{\partial M_\infty})\times [0,+\infty)$.
\end{proof}

Now we are ready to prove the main theorem.
\begin{proof}[Proof of Theorem \ref{Thm: main}]
    The theorem follows immediately from the Calabi-Yau theorem in dimension two, and so we only need to deal with the case when $3\leq n\leq 4$. In these cases we are going to deduce some contradiction by assuming that $(M,g)$ is non-compact but has sublinear volume growth.
    
    First let us find the bounded region to use Lemma \ref{Lem: concave}. Recall that $M$ has nonnegative scalar curvature outside a compact subset $K_0$. Without loss of generality we can assume $K_0$ to be connected. Since $M$ is aspherical at infinity, we are able to find smooth compact sets $K_{n-2}\supset \cdots\supset K_1\supset K_0$ such that
 $$
 ( \pi_i(M\setminus K_{n-i})\to \pi_i(M\setminus K_{n-i-1}))=0\mbox{ for all }2\leq i\leq n-1.
 $$
 In the same way we just assume all $K_i$ to be connected.
    
    By our assumption in the beginning, $(M,g)$ has sublinear volume growth. Applying Lemma \ref{Lem: concave} to the compact set $K_{n-2}$ we can find a larger bounded region $\Omega\supset K_{n-2}$ with weakly mean-concave boundary $\partial\Omega$ with respect to the outer unit normal. By adding bounded components of $M\setminus\Omega$ and then passing to the component of $U$ containing $K_{n-2}$, we can further assume $U$ to be connected.
    
    In order to apply Proposition \ref{Prop: vanishing}  we set $X_i=M\setminus K_{n-i}$ for $3\leq i\leq n$ and set $X_2=M\setminus \overline \Omega$. Now we need to verify the conditions listed in Proposition \ref{Prop: vanishing}. From our construction it is clear that $\pi_i(X_i)\to \pi_i(X_{i+1})$ is the zero map for all $2\leq i\leq n-1$, so it remains to show the injectivity of $H_{n-1}(X_2,\mathbb Q)\to H_{n-1}(X_n,\mathbb Q)$ and we consider the exact sequence
    $$
    H_n(X_n,X_2,\mathbb Q)\to H_{n-1}(X_2,\mathbb Q)\to H_{n-1}(X_n,\mathbb Q).
    $$    
It suffices to show $ H_n(X_n,X_2,\mathbb Q)=0$. To see this we notice that $\overline U\setminus K_0$ consists of non-compact manifolds with boundary, which satisfies the well-known fact $H_n(\overline U\setminus K_0,\partial U)=0$. From the excision we have
$$
H_n(X_n,X_2)=H_n(M\setminus K_0,M\setminus \overline U)=H_n(\overline U\setminus K_0,\partial U)=0.
$$
In particular, we have $H_n(X_n,X_2,\mathbb Q)=0$.

    Now denote $M_\infty$ to be $M\setminus \Omega$ and it follows from Proposition \ref{Prop: vanishing}  that any embedded hypersurface representing a non-zero $\mathbb Q$-homology class cannot admit any metric with positive scalar curvature. Then it follows from Proposition \ref{Prop: splitting} that $(M_\infty ,g)$ splits into the Riemannian product 
    $$(\partial M_\infty,g|_{\partial M_\infty})\times[0,+\infty),$$ which leads to a contradiction to the fact that $(M,g)$ has sublinear volume growth.
    \end{proof}

Finally let us prove Theorem \ref{Thm: topology}.
\begin{proof}[Proof of Theorem \ref{Thm: topology}]
    To show that $M$ is homeomorphic to $\mathbb R^3$ it suffices to prove that $M$ is simply-connected at infinity (see \cite{Sta72} for instance). That is, for any compact subset $K$ we can find a larger compact subset $\tilde K$ such that the inclusion map $i_*:\pi_1(M\setminus\tilde K)\to \pi_1(M\setminus K)$ is the zero map. Let us argue by contradiction and suppose that there is a compact subset $K$ such that any disk bounded by a loop $\gamma$ outside $K$ intersects $K$.
    
    The strategy is to show that we can find arbitrarily large bounded region which has spherical boundary. From the contractibility of $M$ we see that $M$ is non-compact. Since $(M,g)$ has sublinear volume growth, from Lemma \ref{Lem: concave} we can find a bounded smooth region $\Omega$ containing $K$ whose boundary is weakly mean-concave. Denote $E_1,\,E_2,\ldots,E_l$ to be the unbounded components of $M\setminus\Omega$. Set the $\mu$-bubble problem on each $E_i$ in the same way as in the proof of Proposition \ref{Prop: splitting}. Since $(E_i,g)$ has positive scalar curvature, for $\epsilon$ small enough we can find a smooth region $E_{i,\infty}$ from the $\mu$-bubble problem such that $E_i\setminus E_{i,\infty}$ is bounded and that all $\partial E_{i,\infty}$ satisfies
     $$
     \int_{\partial E_{i,\infty}}|\nabla\phi|^2+\frac{1}{2}R(g|_{\partial E_{i,\infty}})\phi^2\,\mathrm d\mathcal H^{2}_g>0
     $$    for any non-zero $\phi\in C_0^\infty(\hat\Sigma^*_\epsilon)$. Taking the test function $\phi\equiv 1$ on each component of $\partial E_{i,\infty}$ and using the Gauss-Bonnet formula we conclude that $\partial E_{i,\infty}$ consists of $2$-spheres. Now we take $\tilde K$ to be $M\setminus\cup_i E_{i,\infty}$.
     
     We claim that any loop $\gamma$ in $M\setminus\tilde K$ can shrink to a point in $M\setminus  K$. Recall that $M$ is contractible. So the loop $\gamma$ bounds a disk $D$ in $M$. After perturbation we may assume that $D$ is transversal to $\partial\tilde K$. Let us take the component $\Sigma$ of $D\setminus \tilde K$ containing $\gamma$. Then $\Sigma$ is simply the disk $D$ with finitely many disjoint sub-disks $D_j$ removed. Since $\partial D_j$ is contained in the spherical boundary $\partial\tilde K$, we can find disks $D_j^*\subset \partial \tilde K$ with $\partial D_j^*=\partial D_j$. As a consequence, the set
     $$
     \Sigma\cup\left(\bigcup_j D_j^*\right)
     $$
     provides a disk outside $K$ with boundary $\gamma$, which leads to a contradiction to our assumption in the beginning.
\end{proof}

\appendix
\section{Finite-volume complete manifolds with nonnegative scalar curvature}\label{sec: finite volume}
\begin{lemma}
	There is a complete metric $g$ on $\mathbb R^n$, $n\geq 3$, with nonnegative scalar curvature such that $(M,g)$ has finite volume.
\end{lemma}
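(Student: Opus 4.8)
The plan is to exhibit an explicit rotationally symmetric metric. Realize $\mathbb R^n$ as $[0,\infty)\times S^{n-1}$ with $\{0\}\times S^{n-1}$ collapsed to the origin, and set $g=\mathrm dr^2+\varphi(r)^2 g_{S^{n-1}}$, where $\varphi:[0,\infty)\to[0,\infty)$ is smooth, positive on $(0,\infty)$, and extends to a smooth odd function near $0$ with $\varphi'(0)=1$, so that $g$ extends smoothly across the origin. For such warped products one has the standard formulas
$$R=\frac{n-1}{\varphi^2}\Big((n-2)\big(1-(\varphi')^2\big)-2\varphi\varphi''\Big),\qquad \vol_g(\mathbb R^n)=\omega_{n-1}\int_0^\infty\varphi(r)^{n-1}\,\mathrm dr,$$
with $\omega_{n-1}=\mathcal H^{n-1}(S^{n-1})$; moreover $g$ is automatically complete because $\{r\le\rho\}$ is compact for every $\rho$. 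Thus the lemma reduces to choosing $\varphi$ with $\int_0^\infty\varphi^{n-1}\,\mathrm dr<\infty$ and the pointwise inequality $(n-2)(1-(\varphi')^2)\ge 2\varphi\varphi''$ on $[0,\infty)$.

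I would take $\varphi(r)=re^{-r^2}$. Then $\varphi$ is odd with $\varphi'(0)=1$ and positive on $(0,\infty)$, and $\int_0^\infty(re^{-r^2})^{n-1}\,\mathrm dr<\infty$; here the Gaussian decay is far more than needed, but already a tail $\varphi\sim c/r$ is integrable precisely because $n\ge 3$, which is where the dimension hypothesis enters (the statement indeed fails for $n=2$, since then $(n-2)(1-(\varphi')^2)$ vanishes and $R\ge 0$ forces $\varphi$ concave, incompatible with finite volume). With $\varphi'=e^{-r^2}(1-2r^2)$ and $\varphi''=2re^{-r^2}(2r^2-3)$, a direct computation (writing $x=r^2$) rewrites the curvature inequality as
$$(n-2)-4(n+1)x+4nx^2\le(n-2)e^{2x}\qquad(x\ge 0).$$

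This last inequality is the only real point of the argument, and it is the step I expect to require the most care, although it is entirely elementary: I would bound $e^{2x}$ below by an initial segment of its power series. For $n\ge 4$ the bound $e^{2x}\ge 1+2x+2x^2+\tfrac43 x^3$ reduces the claim to nonnegativity of $\tfrac43(n-2)x^2-(2n+4)x+6n$, whose discriminant $-28n^2+80n+16$ is negative; for $n=3$ one extra term, $e^{2x}\ge 1+2x+2x^2+\tfrac43 x^3+\tfrac23 x^4$, reduces it to $x^3+2x^2-15x+27\ge 0$ on $[0,\infty)$, which holds since the only critical point there, $x=\tfrac53$, is a minimum with value $\tfrac{329}{27}>0$. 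This yields $R\ge 0$ and completes the proof. If one prefers to avoid the case split, an alternative is to construct $\varphi$ by interpolation through three regimes — equal to $\sin r$ near $0$; concave and strictly decreasing with $|\varphi'|\le\tfrac12$ on a middle band; then convex with $\varphi''$ bounded, $\varphi\to 0$, and a $c/r$ tail — for which the curvature inequality holds for sign reasons in each regime (the term $-2\varphi\varphi''$ is $\ge0$ in the first two, while in the third $2\varphi\varphi''$ is small and $(n-2)(1-(\varphi')^2)\ge\tfrac34(n-2)$), at the cost of a routine verification that the pieces glue smoothly.
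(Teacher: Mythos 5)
Your proof is correct, but it takes a genuinely different route from the paper. The paper works with a conformally flat metric $g=u^{4/(n-2)}g_{euc}$ and reduces nonnegative scalar curvature to superharmonicity $\Delta u\le 0$, building $u$ by capping off $v=(r\ln r)^{-(n-2)/2}$ with a concave, nondecreasing cutoff $\zeta$ (so that $\Delta(\zeta\circ v)=\zeta''|\nabla v|^2+\zeta'\Delta v\le 0$), and then checks completeness via $\int u^{2/(n-2)}\,\mathrm dr\ge\int (r\ln r)^{-1}\,\mathrm dr=\infty$ and finite volume via $\int u^{2n/(n-2)}\,\mathrm dx\lesssim\int (r\ln r)^{-n}r^{n-1}\,\mathrm dr<\infty$. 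You instead write down an explicit rotationally symmetric warped product $\mathrm dr^2+\varphi(r)^2g_{S^{n-1}}$ with $\varphi(r)=re^{-r^2}$; I checked your formulas ($\varphi'=e^{-r^2}(1-2r^2)$, $\varphi''=2re^{-r^2}(2r^2-3)$, the reduction to $(n-2)-4(n+1)x+4nx^2\le(n-2)e^{2x}$ with $x=r^2$, the discriminant $-28n^2+80n+16<0$ for $n\ge4$, and $p(5/3)=329/27$ for the $n=3$ cubic), and they are all correct; the smoothness-at-the-origin criterion (odd extension, $\varphi'(0)=1$) and the completeness and volume statements are also handled properly. What each approach buys: the paper's conformal trick avoids any genuine curvature computation and any case analysis in $n$ (the sign of $R$ is read off from the sign of $\Delta u$, and the delicate balance between completeness and finite volume is carried by the logarithm), whereas your construction gives a fully explicit metric whose verification is elementary calculus, at the cost of the slightly fussy exponential-versus-polynomial inequality and the separate treatment of $n=3$; your parenthetical explanation of why $n=2$ must fail (concavity of $\varphi$ forced by $R\ge0$ is incompatible with $\int\varphi\,\mathrm dr<\infty$) is a nice touch that the paper does not include.
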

\begin{proof}
	Let us consider a conformally flat metric $g=u^{\frac{4}{n-2}}g_{euc}$ with $u$ a smooth positive function on $\mathbb R^n$ to be determined. To ensure $(\mathbb R^n,g)$ having nonnegative scalar curvature we just need to guarantee $\Delta u\leq 0$ concerning the formula
	$$
	-\Delta u=c_nR(g)u^{\frac{n+2}{n-2}}.
	$$
	
	The desired function $u$ is constructed as follows. We start with a function
	$$v=\left(r\ln r\right)^{-\frac{n-2}{2}}.$$
	A straight-forward computation gives
	$$
	\Delta v=-\frac{n-2}{2}\left(r\ln r\right)^{-\frac{n+2}{2}}\left(\frac{n-2}{2}\ln^2 r-\frac{n}{2}\right).
	$$
In particular, there is an absolute constant $r_0$ such that $\Delta v<0$ when $r\geq r_0$. Denote $v_0=v(r_0)$. To do composition we have to construct a function $\zeta:[0,+\infty)\to [0,v_0/2]$ satisfying
\begin{itemize}
	\item $\zeta(t)\equiv t$ in a neighborhood of $0$ and $\zeta(t)\equiv const.$ when $t\geq v_0/2$;
	\item $\zeta'(t)\geq 0$ and $\zeta''(t)\leq 0$ for all $t\geq 0$.
\end{itemize}
Such function can be constructed from integration. Take a nonnegative monotone-decreasing function $\eta:[0,+\infty)\to [0,1]$ such that $\eta\equiv 1$ in $[0,v_0/4]$ and $\eta\equiv 0$ in $[v_0/2,+\infty)$. It suffices to define
$$
\zeta(t)=\int_0^t\eta(s)\,\mathrm ds.
$$
Let $u=\zeta\circ v$. Note that $u$ is defined on the whole $\mathbb R^n$ since it is constant in the $r_0$-ball. It is direct to compute
$$
\Delta u=\zeta''|\nabla v|^2+\zeta'\Delta v.
$$
When $r\geq r_0$ it follows from $\Delta v<0$ and the construction of $\zeta$ that $\Delta u\leq 0$. When $r\leq r_0$ we simply have $\Delta u\equiv 0$ due to its constancy.

It remains to verify the completeness and the finite volume of $(\mathbb R^n,g)$. To see the completeness we compute
\begin{equation*}
	\begin{split}
		\dist(O,\infty)&=\int_{0}^{+\infty}u^{\frac{2}{n-2}}\,\mathrm dr\\
		&\geq \int_{r_0}^{+\infty}\frac{1}{r\ln r}\,\mathrm dr=\left.\ln\ln r\right|_{r_0}^{+\infty}=+\infty.
	\end{split}
\end{equation*}
On the other hand, the volume can be computed as
\begin{equation*}
	\begin{split}
		\vol(\mathbb R^n,g)&=\int_{\mathbb R^n}u^{\frac{2n}{n-2}}\,\mathrm dx\\
		&\leq \omega_n r_0^n\left(\frac{v_0}{2}\right)^{\frac{2n}{n-2}}+n\omega_n\int_{r_0}^{+\infty}\frac{1}{r\ln^n r}\,\mathrm dr\\
		&<+\infty.
	\end{split}
\end{equation*}
This completes the proof.
\end{proof}

\section{$\mathbb Q$-homology vanishing theorem}
The $\mathbb Q$-homology vanishing conjecture was raised by Gromov \cite[page 96]{Gromov23} as following
\begin{conjecture}\label{Conj: Q vanishing}
    Let $M^n$ be a closed manifold admitting positive scalar curvature. For any continuous map $f:M\to X$ mapping $M$ into an aspherical topological space $X$, we have $f_*([M])=0$ in $H_n(X,\mathbb Q)$.
\end{conjecture}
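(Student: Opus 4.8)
The plan is to prove Conjecture \ref{Conj: Q vanishing} by induction on $n$, descending through codimension-one submanifolds in the spirit of the Schoen--Yau minimal hypersurface argument, with the $\mu$-bubble technology of Lemma \ref{Lem: concave} available to keep the intermediate hypersurfaces compact and strictly stable whenever that is needed. First I would make the standard reduction. Since $X$ is aspherical, the classifying map $g\colon M\to B\Gamma$ with $\Gamma=\pi_1(M)$ fits into a homotopy factorization of $f$ through $B\Gamma\to X$, so $f_*$ is the composition of $g_*$ with the induced map on $H_n(\,\cdot\,;\mathbb Q)$; hence it suffices to take $X=B\Gamma$, $f=g$ and to prove $g_*[M]=0$ in $H_n(B\Gamma;\mathbb Q)$. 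The base cases $n\le 2$ are immediate: a closed surface of positive scalar curvature is $S^2$ or $\mathbb{RP}^2$, and a map $S^2\to B\Gamma$ is null-homotopic because $\pi_2(B\Gamma)=0$, while $H_2(\mathbb{RP}^2;\mathbb Q)=0$.

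For the inductive step assume $M^n$, with $n\le 7$ so that area-minimizing hypersurfaces are smooth, carries positive scalar curvature and, for contradiction, $g_*[M]\ne 0$. Suppose first that there is a class $\gamma\in H^1(B\Gamma;\mathbb Q)$ with $\gamma\cap g_*[M]\ne 0$ in $H_{n-1}(B\Gamma;\mathbb Q)$. Pick a closed embedded hypersurface $\Sigma^{n-1}\subset M$ Poincaré dual to an integral multiple of $g^*\gamma$ which is area-minimizing in its homology class; it is automatically compact since $M$ is closed. By the projection formula $g_*[\Sigma]=g_*\big(g^*\gamma\cap[M]\big)=\gamma\cap g_*[M]\ne 0$, so $\Sigma$ is again $\mathbb Q$-essential with respect to the aspherical space $B\Gamma$. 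On the other hand, the Schoen--Yau conformal trick applied to the stable minimal hypersurface $\Sigma$ — using $R(g)>0$ together with the stability inequality, which yields $\int_\Sigma|\nabla\phi|^2+\tfrac12 R(g|_\Sigma)\phi^2>0$ for all nonzero $\phi$ — endows $\Sigma$ with a metric of positive scalar curvature (and when $\Sigma$ is disconnected or merely stable one first replaces it by a $\mu$-bubble as in Lemma \ref{Lem: concave} to make the second variation strict). This contradicts the inductive hypothesis in dimension $n-1$, and the descent closes.

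The genuine obstacle is exactly the hypothesis I slipped in: the existence of a suitable $\gamma\in H^1(B\Gamma;\mathbb Q)$. There are groups $\Gamma$ with $H^1(\Gamma;\mathbb Q)=0$ but $H_n(\Gamma;\mathbb Q)\ne 0$, so $g_*[M]$ may be annihilated by every one-dimensional cohomology class and the codimension-one descent stalls. For the range of dimensions needed in this paper — $n\le 3$, which is all that Proposition \ref{Prop: vanishing} and hence Theorem \ref{Thm: main} require — this does not happen: the Gromov--Lawson/Schoen--Yau/Perelman classification writes a closed positive scalar curvature $3$-manifold as a connected sum of spherical space forms and copies of $S^2\times S^1$, and the fundamental class of each summand dies in the rational homology of any aspherical target (finite $\pi_1$ contributes nothing rationally, and an $S^2\times S^1$-summand maps through $S^1$), so $g_*[M]=0$ by additivity over the connected sum. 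For general $n$ one would have to replace the $H^1$-descent by a more flexible slicing — Gromov's band-width and macroscopic-dimension estimates, or a warped $\mu$-bubble over a carefully chosen homology class combined with $\pi_2(X)=0$ to handle the ``spherical'' part of $H_*$ that the descent cannot see (this is how the cases $n\le 5$ are known, following Chodosh--Li and Chodosh--Li--Liokumovich) — and it is precisely this point that keeps Conjecture \ref{Conj: Q vanishing} open in full.
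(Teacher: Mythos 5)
There is no proof of this statement in the paper to compare against: the statement is Gromov's $\mathbb Q$-homology vanishing conjecture, quoted verbatim as Conjecture \ref{Conj: Q vanishing}, and the paper treats it as open. What the paper actually proves is the much weaker variant Proposition \ref{Prop: vanishing} (maps of closed PSC manifolds of dimension $2$ or $3$ into an $n$-manifold, $n=3,4$, equipped with an exhaustion along which $\pi_2,\dots,\pi_{n-1}$ die), which is all that Theorem \ref{Thm: main} needs, building on \cite{HZ23+}, \cite{CL20}, \cite{LM23}.

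Your proposal likewise does not prove the conjecture, and you say so yourself: the inductive Schoen--Yau descent requires a class $\gamma\in H^1(B\Gamma;\mathbb Q)$ with $\gamma\cap g_*[M]\neq 0$, and such a class need not exist. If $H^1(\Gamma;\mathbb Q)=0$ (e.g.\ $\Gamma$ perfect) while $H_n(\Gamma;\mathbb Q)\neq 0$, the nonvanishing of $g_*[M]$ only produces a dual class in $H^n(B\Gamma;\mathbb Q)$, and since $B\Gamma$ carries no Poincar\'e duality there is no mechanism to decompose it into degree-one pieces; the codimension-one descent therefore stalls at the very first step, which is exactly the open part of the problem. In addition, even the conditional argument is confined to $n\le 7$ by the regularity of area minimizers (the conjecture is stated for all $n$), the invocation of Lemma \ref{Lem: concave} is out of place here (that lemma concerns sublinear volume growth on open manifolds, not strictness of stability, which already follows from $R(g)>0$), and for non-orientable $M$ such as $\mathbb{RP}^2$ the $\mathbb Q$-fundamental class needs a separate convention. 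The part of your write-up that is sound --- decomposing a closed PSC $3$-manifold as a connected sum of spherical space forms and $S^2\times S^1$'s, breaking it along spheres using $\pi_2(X)=0$ and killing the resulting cycles using the vanishing of higher homotopy --- is essentially the argument the paper gives for Proposition \ref{Prop: vanishing}, i.e.\ for the special low-dimensional variant, not for Conjecture \ref{Conj: Q vanishing} itself. So the proposal should be presented as a proof of that variant (or of the conjecture under the additional hypothesis that suitable $H^1$-classes exist at every stage of the descent), not of the conjecture.
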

Based on the work \cite{CL20,LM23} the author and his collaborator \cite{HZ23+} proved for $3\leq n\leq 5$ that if $(M,g)$ is a closed $(n-1)$-manifold with positive scalar curvature and $X$ is an aspherical manifold, then for any continuous map $f:M\to X$ we have $f_*([M])=0\in H_{n-1}(X,\mathbb Q)$. In this paper, we need to use the following variant.
\begin{proposition}\label{Prop: vanishing}
Let $n=3$ or $4$, and $X^n$ be an $n$-manifold associated with a finite open exhaustion $X_2\subset X_3\subset \cdots \subset X_n=X$ satisfying 
\begin{itemize}
\item $H_{n-1}(X_2,\mathbb Q)\to H_{n-1}(X,\mathbb Q)$ is injective;
\item $\pi_i(X_i)\to \pi_{i}(X_{i+1})$ is the zero map for all $2	\leq i\leq n-1$.
\end{itemize} 
Assume that $(M^{n-1},g)$ is a closed manifold with positive scalar curvature. Then for any continuous map $f:M\to X_2$ we must have $f_*([M])=0\in H_{n-1}(X_2,\mathbb Q)$.
\end{proposition}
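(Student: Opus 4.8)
The plan is to use the layered homotopy hypothesis to replace $f$ by a map out of an $n$-dimensional substitute for $B\pi_1(M)$, thereby reducing the assertion to the vanishing of $H_{n-1}(\pi_1(M);\mathbb{Q})$ for a closed orientable manifold of dimension $\le 3$ with positive scalar curvature, which is the only place where the geometry enters; this adapts the method of \cite{HZ23+} to the present target, which need not be aspherical.

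First I would dispatch the routine reductions. A closed non-orientable manifold has trivial rational fundamental class, and both rational homology and the homotopy-group hypotheses split over connected components; so we may assume that $M$ is connected and oriented and that each $X_i$ is connected (replacing $X_i$ by the component meeting the image of $M$). Write $F\colon M\to X$ for the composition of $f$ with the inclusions $X_2\hookrightarrow\cdots\hookrightarrow X_n=X$. Since $H_{n-1}(X_2;\mathbb{Q})\to H_{n-1}(X;\mathbb{Q})$ is injective, it suffices to show $F_*[M]=0$ in $H_{n-1}(X;\mathbb{Q})$.

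The core of the argument is a cell-attachment construction. Put $Y_2:=M$ and $g_2:=f\colon Y_2\to X_2$. Inductively, for $k=2,\dots,n-1$, attach $(k{+}1)$-cells to $Y_k$ along a generating set of the $\mathbb{Z}[\pi_1(M)]$-module $\pi_k(Y_k)$ to obtain a complex $Y_{k+1}$ with $\pi_1(Y_{k+1})=\pi_1(M)$ and $\pi_j(Y_{k+1})=0$ for $2\le j\le k$; since the class of each attaching map $S^k\to Y_k\xrightarrow{\,g_k\,}X_k$ maps to $0$ under the zero map $\pi_k(X_k)\to\pi_k(X_{k+1})$, the composition of $g_k$ with $X_k\hookrightarrow X_{k+1}$ extends to a map $g_{k+1}\colon Y_{k+1}\to X_{k+1}$. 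The outcome is a CW complex $Y_n\supseteq M$ with $\pi_1(Y_n)=\Gamma:=\pi_1(M)$ and $\pi_j(Y_n)=0$ for $2\le j\le n-1$, together with $\hat F:=g_n\colon Y_n\to X$ restricting to $F$ on $M$. Attaching further cells of dimension $\ge n{+}1$ turns $Y_n$ into a $K(\Gamma,1)$, so the inclusion is $n$-connected and induces an isomorphism $H_{n-1}(Y_n;\mathbb{Q})\cong H_{n-1}(\Gamma;\mathbb{Q})$. Writing $i\colon M\hookrightarrow Y_n$, we have $F_*[M]=\hat F_*\,i_*[M]$ with $i_*[M]\in H_{n-1}(Y_n;\mathbb{Q})\cong H_{n-1}(\Gamma;\mathbb{Q})$, so everything reduces to the claim $H_{n-1}(\Gamma;\mathbb{Q})=0$.

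Proving that claim is the main obstacle, and it is where $n\le 4$ and $R(g)>0$ enter. If $n=3$, then $M^2$ is a closed orientable surface of positive curvature, hence $M=S^2$ and $\Gamma=1$. If $n=4$, then $M^3$ is a closed orientable $3$-manifold of positive scalar curvature, and by the structure theory of such manifolds (Perelman's work, building on Schoen-Yau and Gromov-Lawson) $M$ is a connected sum of spherical space forms and copies of $S^2\times S^1$, so $\Gamma$ is a free product of finite groups and infinite cyclic groups. In either case $H_{n-1}(\Gamma;\mathbb{Q})=0$, because rational group homology of a finite group vanishes in positive degrees, that of a free group vanishes above degree $1$, and reduced homology of a free product is the direct sum of the reduced homologies of the factors. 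Hence $i_*[M]=0$, so $F_*[M]=0$, and by the injectivity above $f_*[M]=0$ in $H_{n-1}(X_2;\mathbb{Q})$. (For $n\ge 5$ the analogous input $H_{n-1}(\pi_1(M^{n-1});\mathbb{Q})=0$ for PSC $M$ is not available by comparably elementary means, which is why the dimension is restricted.)
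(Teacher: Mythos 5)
Your proposal is correct, and it reaches the conclusion by a genuinely different route than the paper. The paper works directly on the cycle: for $n=3$ it notes $M$ is $S^2$ or $\mathbb{RP}^2$, so the rational fundamental class is spherical (or zero) and is killed by the zero map $\pi_2(X_2)\to\pi_2(X)$; for $n=4$ it lifts to the orientable case, invokes the PSC classification, passes to a finite cover of $M$ that is a connected sum of copies of $S^2\times S^1$, uses $\pi_2(X_2)\to\pi_2(X_3)=0$ to compress along the essential $2$-spheres and rewrite the image of the fundamental class as a sum of spherical $3$-cycles in $X_3$, kills those with $\pi_3(X_3)\to\pi_3(X)=0$, and finally transfers back to $M$ through the covering degree (rational coefficients) before using the injectivity of $H_{n-1}(X_2;\mathbb Q)\to H_{n-1}(X;\mathbb Q)$. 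You instead enlarge the source: attaching cells to kill $\pi_2,\dots,\pi_{n-1}$ of $M$ and extending $f$ stage by stage over the new cells consumes exactly the same zero-map hypotheses (the only point to note, which is routine, is that vanishing of the based class in $\pi_k(X_{k+1})$ gives the free null-homotopy needed to extend over each cell), and this reduces everything to $H_{n-1}(\pi_1(M);\mathbb Q)=0$, which the same PSC classification supplies: $\pi_1$ trivial for $n=3$, and a free product of finite groups and copies of $\mathbb Z$ for $n=4$, whose rational homology vanishes in degree $3$ by the transfer argument and the wedge decomposition of classifying spaces. So the two arguments use identical inputs (PSC structure theory in dimensions $2$ and $3$, the two zero maps, and the injectivity hypothesis), but yours trades the paper's finite-cover-plus-compression of the cycle for a standard group-homology computation, handling the finite free factors by transfer rather than by passing to a cover; this is closer in spirit to the method of \cite{HZ23+} and is arguably cleaner, while the paper's version is more hands-on and geometric. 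Your preliminary reductions (discarding non-orientable components, and replacing each $X_i$ by the component meeting $f(M)$, which preserves both hypotheses because rational homology splits over components and inclusions respect them) are also correct.
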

\begin{proof}
When $n=3$, $M$ can only be a topological $2$-sphere or the projective space $\mathbb RP^2$. Since $\pi_2(X_2)\to \pi_2(X)$ is the zero map, we obtain $f_*([M])=0\in H_2(M,\mathbb Q)$ in both cases and the conclusion comes from the injectivity of the map $H_{2}(X_2,\mathbb Q)\to H_{2}(X,\mathbb Q)$. When $n=4$, the argument is similar but slightly more complicated. 
By lifting we may assume $M$ to be orientable and so it follows from the classification result of orientable closed $3$-manifolds with positive scalar curvature that $M$ is diffeomorphic to the connected sum
$$
(\mathbb S^3/\Gamma_1)\#\cdots\#(\mathbb S^3/\Gamma_k)\#l(\mathbb S^2\times \mathbb S^1).
$$
Passing to some finite cover $\tilde M$ we see that $\tilde M$ is a connected sum of finitely many $\mathbb S^2\times \mathbb S^1$s. Since $\pi_2(X_2)\to \pi_2(X_3)$ is the zero map, we can break $\tilde M$ into spherical $3$-cycles in $X_3$. From the zero map $\pi_3(X_3)\to \pi_3(X)$ we conclude $\tilde f_*([\tilde M])=0\in H_3(X,\mathbb Q)$, where $\tilde f$ is denoted to be the composition of the map $f$ and the covering map $\tilde M\to M$. Once again, from the injectivity of the map $H_{3}(X_2,\mathbb Q)\to H_{3}(X,\mathbb Q)$ we obtain $\tilde f_*([\tilde M])=0\in H_3(X_2,\mathbb Q)$ and so $ f_*([ M])=0\in H_3(X_2,\mathbb Q)$.
\end{proof}

\end{document}